\setlist[enumerate,1]{font=\upshape, itemsep=.5ex}\setlist[itemize,1]{font=\upshape, itemsep=.5ex}
\def\Z{{\mathbb Z}}
\def\F{{\mathbb F}}
\def\Q{{\mathbb Q}}
\def\R{{\mathbb R}}
\def\bfx{{\mathbf{x}}}
\def\bfy{{\mathbf{y}}}
\def\cali{\mathcal{I}}
\def\calf{\mathcal{F}}
\def\cals{\mathcal{S}}
\def\calc{\mathcal{C}}
\def\calm{\mathcal{M}}
\def\calo{\mathcal{O}}
\newcommand{\cCFK}{\mathcal{C\!F\!K}}
\newcommand{\cHFK}{\mathcal{H\!F\! K}}
\newcommand{\CFK}{\mathit{C\!F\!K}}
\newcommand{\compactlist}{\begin{list}{\enumerate}{\setlength{\leftmargin}{1em}}}
\def\cs{\mathbin{\#}}
\def\co{\colon\thinspace}
\def\U{\Upsilon}
\def\Ut{{\Upsilon^{\sf{Tor}}}}
\newcommand{\Uts}[1] {     \Upsilon^{\sf Tor}_{#1}  }
\def\gr{{\rm{gr}}}
\newtheorem{theorem}{Theorem}
\newtheorem{lemma}[theorem]{Lemma}
\newtheorem{corollary}[theorem]{Corollary}
\newtheorem{proposition}[theorem]{Proposition}
\theoremstyle{definition}
\newtheorem{definition}[theorem]{Definition}
\newtheorem{example}{Example}
\def\MR#1{}
\begin{document}
\title{An Upsilon torsion function for knot Floer homology}
\author{Samantha Allen}
\author{Charles Livingston}
\thanks{This work was supported by a grant from the National Science Foundation, NSF-DMS-1505586.   }
\address{Charles Livingston: Department of Mathematics, Indiana University, Bloomington, IN 47405}\email{livingst@indiana.edu}
\address{\parbox{\linewidth}{Samantha Allen: Department of Mathematics and Computer Science, Duquesne University,\\ Pittsburgh, PA 15282}}\email{allens6@duq.edu}


\begin{abstract}
The Heegaard Floer knot complex $\CFK^\infty(K)$ defined by Ozsv\'ath-Szab\'o and the related  complex $\cCFK^\infty(K)$ developed by Zemke have   rich algebraic structures.  Among the invariants that are derived from these is the knot concordance invariant $\tau(K)$ and the Oszv\'ath-Stipsicz-Szab\'o Upsilon function $\U_K(t)$, which has derivative $\tau(K)$ near 0.

Recent work by a number of authors has found that torsion of  a certain type in the homology of these complexes provides constraints on the number of maxima and minima in cobordisms between knots and, in particular, on the number of minima in slice disks for knots.  One such invariant is $\text{Ord}_{v}(K)$ described by Juhasz-Miller-Zemke.   Here we construct a generalization that we denote $\Ut_K(t)$, a piecewise linear function on $[0,2]$ having derivative equal to $\text{Ord}_{v}(K)$ near 0.  The  value of $\Ut_K(t)/t$ provides new constraints on cobordisms between knots.
\end{abstract}

\maketitle


\section{Introduction}

A series of papers~\cite{gong-marengon, MR4194296, MR4007369, MR4186142, MR3590358, 2022arXiv220604196H} has identified torsion invariants within knot   homology theories that provide constraints on the properties of  cobordisms between  knots in $S^3$.  Of particular interest here is~\cite{MR4186142}, in which the  curved Heegaard Floer knot complex $\cCFK^\infty(K)$  is used as the source of  a torsion invariant   denoted $\text{Ord}_{v}(K)$  which can be used, for instance, to bound the number of local minima in slice disks for knots.  More generally, it relates to  such four-dimensional problems as determining ribbon relationships between knots and three-dimensional problems about band relationships or crossing change relationships between knots.  References about these  topics include~\cites{MR3307286, MR4024565, Sarkar_2020, MR3825859, MR4186142, MR968881, hom2020ribbon, MR704925, gong-marengon, friedl2021homotopy, MR634459, MR1075165, MR2262340, MR2755489, MR4017212, 2021arXiv210616007L}. 

We will describe a family of such torsion invariants, which we denote $\Ut_K(t)$, that is  parameterized by $t \in [0,2]$.   Its connection to the Upsilon invariant defined by Ozsv\'ath-Stipsicz-Szab\'o~\cite{MR3667589}  and reinterpreted in~\cite{MR3604374} will be clear: the derivative of $\Ut_K(t)$ near 0 equals $\text{Ord}_{v}(K)$, just as the derivative of $\U_K(t)$ near zero is the Ozsv\'ath-Szab\'o  invariant $\tau(K)$.   The value of $\Ut_K(1)$ equals the invariant   $\text{Ord}_U(K)$ defined by Gong-Marengon~\cite{gong-marengon} using  {\it unoriented knot Floer homology} as defined by Ozsv\'ath-Stipsicz-Szab\'o~\cite{MR3694597}.  We do not pursue applications to nonorientable cobordisms here.

We let $\F$ denote the field with two elements.   The curved Heegaard Floer knot complex $\cCFK^\infty(K)$  is a graded $\F[u, v]$--complex generated over $\F$ by elements of the form $u^iv^j\bfx$, with $i, j \in \Z$. We will be more precise later.  The homology of this complex, $\cHFK^\infty(K)$, is isomorphic to the $\F[u,v]$--module $\F[u, u^{-1}, v, v^{-1}]$.  Viewed as an $\F[U]$--module, where $U = uv$, it contains as a summand the  Heegaard Floer knot complex $\CFK^\infty(K)$ defined by Ozsv\'ath-Szab\'o~\cite{MR2065507}.

The basic idea of the  defintion of  $\Ut_K(t)$ is straightforward.  For any $t \in [0,2]$ we can define an increasing filtration $\calf^t$ on  $\cCFK^\infty(K)$ given by a nested sequence of $\F$--subspaces $\calf^t_s$ parameterized by real numbers    $s$, where $\calf^t_s$ is  spanned by elements of the from $u^iv^j\bfx $ for which $\frac{t}{2} j + ( 1 - \frac{t}{2})i \ge -s$.
We define the {\it Upsilon torsion function}, $\Ut_K(t)$, to be $2\epsilon$, where $\epsilon$ is the minimum value of $\epsilon$ such that for all $s$, the inclusion induced map $H(\calf^t_s) \to H(\calf^t_{s +\epsilon})$ has image isomorphic to $\F[u,v]$.   (Intuitively, shifting by $\epsilon$ eliminates all torsion in  $H(\calf^t_s) $.)

The basic properties of $\Ut_K(t)  $ are provided by the following theorem.
\begin{theorem}\label{thm:props}  For every knot $K$,
\begin{enumerate}
\item  $\Ut_K(t)$ is a piecewise linear nowhere negative function on $[0,2]$.\vskip.05in
\item  $\Ut_K(0)= 0 $ and  $\Ut_K(t)= \Ut_K(2-t)  $.\vskip.05in
\item  $\Ut_{K\cs J}(t) = \max\{ \Ut_K(t), \Ut_J(t)\}$.\vskip.05in
\item $\Ut_{-K}(t) = \Ut_K(t)$.
\end{enumerate}
\end{theorem}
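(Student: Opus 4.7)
The plan is to deduce each property directly from the definition, invoking in each case a structural feature of $\cCFK^\infty(K)$: finite generation and affine dependence on $t$ for (1); the basepoint-swap symmetry $u\leftrightarrow v$ for (2); a Künneth splitting for (3); and duality for (4). Throughout I fix a finite bigraded generating set $\{\bfx_k\}$ for $\cCFK^\infty(K)$ over $\F[u,v]$, with $\bfx_k$ at bigrading $(a_k,b_k)$; then $u^iv^j\bfx_k$ lies in $\calf^t_s$ iff the affine inequality $\tfrac{t}{2}(j+b_k)+(1-\tfrac{t}{2})(i+a_k)\ge -s$ holds.

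For (1), as $t$ varies in $[0,2]$ the combinatorial type of the collection $\{\calf^t_s\}_{s\in\R}$ changes at only finitely many rational values of $t$. Between these break-points the minimum shift $\epsilon(t)$ making every $H(\calf^t_s)\to H(\calf^t_{s+\epsilon})$ have image $\F[u,v]$ is realized as a maximum of finitely many affine expressions in $t$, hence is piecewise linear and nonnegative. The equality $\Ut_K(0)=0$ follows by a direct computation: at $t=0$ the filtration reduces to the $i$-filtration on $\cCFK^\infty(K)$, and the corresponding homological inclusions already have image $\F[u,v]$ at every level, with no shift required.

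For (2), the substitution $t\mapsto 2-t$ interchanges the coefficients of $i$ and $j$ in the filtration inequality. The curved complex $\cCFK^\infty(K)$ carries a basepoint-swap involution that interchanges $u$ and $v$ and carries $\calf^t_s$ to $\calf^{2-t}_s$, so the minimum torsion-killing shift is the same for $t$ and $2-t$, proving $\Ut_K(t)=\Ut_K(2-t)$. For (4), $\cCFK^\infty(-K)$ is obtained from $\cCFK^\infty(K)$ via a dualization that negates the bigrading; the resulting involution sends the filtration $\calf^t_s$ to a corresponding filtration in which the condition ``image isomorphic to $\F[u,v]$'' is preserved, giving $\Ut_{-K}(t)=\Ut_K(t)$.

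For (3), I would use the Künneth identification $\cCFK^\infty(K\cs J)\simeq\cCFK^\infty(K)\otimes_{\F[u,v]}\cCFK^\infty(J)$, under which the filtration becomes $\calf^t_s(K\cs J)=\sum_{s_1+s_2=s}\calf^t_{s_1}(K)\otimes\calf^t_{s_2}(J)$. Since $\cHFK^\infty\cong\F[u^{\pm},v^{\pm}]$ is rank one over $\F[u^{\pm},v^{\pm}]$ the Künneth spectral sequence collapses, and a direct analysis shows that the minimum shift killing torsion in the tensor product is $\max\{\Ut_K(t),\Ut_J(t)\}/2$: once both factors are torsion-free at their respective levels so is the tensor product, while a smaller shift leaves uncanceled torsion inherited from the larger factor. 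I expect (3) to be the main obstacle, since (1), (2), and (4) reduce quickly to structural symmetries of $\cCFK^\infty(K)$, whereas (3) requires careful bookkeeping of how the tensor-product filtration interacts with the specific torsion that must be killed.
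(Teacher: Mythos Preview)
Your approach to (1) and (2) is essentially the same as the paper's: filtration levels of generators are affine in $t$, so the torsion order is a max--min of finitely many affine functions, and the $t\leftrightarrow 2-t$ symmetry comes from the $u\leftrightarrow v$ basepoint swap.

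The real gap is in (3), and to a lesser extent (4). The paper does not argue from K\"unneth and a spectral-sequence collapse; instead it first proves a structural lemma (Lemma~\ref{lem:decomp} / Theorem~\ref{thm:oned}): for each fixed $t$ there is a \emph{filtered} change of basis splitting $(\CFK^\infty(K),\calf^t)$ as a single free rank-one piece $C$ plus finitely many two-term pieces $D_\alpha\colon X\to Y$, where $\alpha$ records the filtration gap. With this in hand, (3) reduces to computing $D_{\epsilon_1}\otimes D_{\epsilon_2}$ and $C\otimes D_\epsilon$ by hand. The key point --- which your sketch misses --- is that $D_{\epsilon_1}\otimes D_{\epsilon_2}$ decomposes (after another filtered change of basis) as \emph{two} copies of $D_{\min(\epsilon_1,\epsilon_2)}$. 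So tensoring two torsion pieces produces only the \emph{minimum} of their orders; the \emph{maximum} survives solely because each factor has a free summand $C$, and $C\otimes D_{\epsilon}\cong D_{\epsilon}$. Your heuristic ``a smaller shift leaves uncanceled torsion inherited from the larger factor'' is therefore not the mechanism at work: the torsion of maximal order is not inherited through a torsion--torsion product, and a bare K\"unneth argument that tracks only ``when both factors become torsion-free'' does not see this.

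For (4) the paper again leans on the decomposition: dualizing $D_\alpha$ gives $D_\alpha$ with the same $\alpha$, so $\Ut_{-K}=\Ut_K$ is immediate. Your statement that dualization ``preserves the condition `image isomorphic to $\F[u,v]$' '' is not self-evident --- dualization swaps cycles and cocycles and reverses the direction of the filtration, so one must actually check that torsion orders match, which is exactly what the $D_\alpha$ form makes transparent. In short: you have identified the right symmetries, but the paper's proofs of (3) and (4) go through the canonical-form lemma, and without it your arguments for those parts are incomplete.
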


Formally, the relationship between  $\Ut_K(t)   $ and   $ \text{Ord}_v(K) $ is given by the next theorem.

\begin{theorem}\label{thm:Ord}  There exists an $\eta >0$ such that if $0 < \alpha<\eta$, then $  \frac{d}{dt}\Ut_K(t)\big|_{t=\alpha} =  \Ut_K(\alpha)/\alpha =   \text{Ord}_v(K) $.
\end{theorem}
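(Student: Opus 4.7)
The first equality in Theorem~\ref{thm:Ord} is formal. By Theorem~\ref{thm:props}(1)--(2), $\Ut_K(t)$ is piecewise linear on $[0,2]$ with $\Ut_K(0)=0$, so there is some $\eta>0$ on which $\Ut_K(t)=ct$ is linear, and for any $\alpha\in(0,\eta)$ both $\frac{d}{dt}\Ut_K(t)|_{t=\alpha}$ and $\Ut_K(\alpha)/\alpha$ equal $c$. The content of the theorem is thus the identification $c = \text{Ord}_v(K)$, which I would establish by expressing both quantities in terms of a common reduced model for $\cCFK^\infty(K)$.

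I would fix a reduced basis $\{\bfx_\lambda\}$ for $\cCFK^\infty(K)$ over $\F[u,v]$ in the standard simplified form, so that each generator sits at a lattice point $(i_\lambda, j_\lambda)$ and $\partial \bfx_\lambda$ is a sum of monomial multiples of other basis elements. Geometrically, $\calf^t_s$ is the $\F$--span of those basis monomials lying in the closed half-plane $H_{t,s}$ bounded by the line $L_{t,s}: \frac{t}{2}j + (1-\frac{t}{2})i = -s$, which for small $t>0$ has very steep negative slope $-(2-t)/t$. The condition that $H(\calf^t_s) \to H(\calf^t_{s+\epsilon})$ have image isomorphic to $\F[u,v]$ for every $s$ is equivalent, in a persistence-style picture, to requiring that $\epsilon$ exceed the longest finite bar in the barcode decomposition of the filtered complex. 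Each torsion-producing differential arrow in the reduced model contributes a bar whose length in the $s$-coordinate is the $F$-displacement $\frac{t}{2}\Delta j + (1-\frac{t}{2})\Delta i$, where $(\Delta i,\Delta j)$ is the signed displacement along the arrow. Hence $\Ut_K(t) = 2\max_{\text{torsion arrows}} \bigl(\frac{t}{2}\Delta j + (1-\frac{t}{2})\Delta i\bigr)$, which is piecewise linear as a maximum of finitely many linear functions.

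On the interval $[0,\eta]$ this maximum is realized by a single arrow with displacement $(\Delta i_0,\Delta j_0)$, so $\Ut_K(t) = 2\bigl(\frac{t}{2}\Delta j_0 + (1-\frac{t}{2})\Delta i_0\bigr)$. Setting $t=0$ and invoking $\Ut_K(0)=0$ forces $\Delta i_0 = 0$, so the dominant arrow is a pure $v$-arrow and $\Ut_K(t) = t\cdot \Delta j_0$ on $[0,\eta]$. The remaining step is to match $\Delta j_0$ with $\text{Ord}_v(K)$: by~\cite{MR4186142}, $\text{Ord}_v(K)$ is the smallest $n$ such that $v^n$ annihilates $v$-torsion in the appropriate quotient constructed from the reduced model, which is precisely the longest $v$-displacement among the pure $v$-arrows of the reduced basis. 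This yields $c = \Delta j_0 = \text{Ord}_v(K)$.

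The main obstacle is making the persistence argument precise and matching the two definitions of the ``longest $v$-torsion bar.'' The structure theorem for $\cCFK^\infty(K)$ as an $\F[u,v]$-complex says that a reduced basis decomposes the complex into a single free $\F[u,v]$-tower (corresponding to the $\F[u,v]$ image in the definition of $\Ut_K$) plus a finite number of torsion summands, each contributing a bar of finite length. Verifying that the $v$-length of the longest of these bars matches the published invariant $\text{Ord}_v(K)$ requires carefully cross-referencing the conventions of~\cite{MR4186142} and ensuring that the reduction to pure $v$-arrows (forced by $\Ut_K(0)=0$) is consistent with the reduced-model description used there. Once this identification is in place, $\eta$ can be chosen as the smallest $t$-crossover value among the finitely many competing arrows, and the theorem follows.
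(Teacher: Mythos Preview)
Your overall strategy---show that for small $t$ the longest bar in the $\calf^t$--barcode comes from a purely vertical arrow, then identify that vertical length with $\text{Ord}_v(K)$---is the right one, and is what the paper does. But the step where you write
\[
\Ut_K(t) \;=\; 2\max_{\text{torsion arrows}} \Bigl(\tfrac{t}{2}\Delta j + \bigl(1-\tfrac{t}{2}\bigr)\Delta i\Bigr),
\]
with the maximum taken over the arrows of a \emph{single fixed} reduced basis, is not correct. The barcode for the $\calf^t$--filtration is obtained from a filtered change of basis that genuinely depends on $t$; individual arrows of one fixed basis do not correspond to bars. The paper's own $T(3,4)$ example in the appendix makes this explicit: for the standard five-generator staircase the longest single arrow at $t=1/2$ has $\calf^{1/2}$--length $6/4$, yet after the correct change of basis the longest bar has length $2/4$, giving $\Ut_{T(3,4)}(1/2)=1$, not $3$. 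Your formula is also inconsistent with $\Ut_K(0)=0$: at $t=0$ it would read $2\max(\Delta i)$, which is positive whenever any horizontal arrow is present. So the deduction ``$\Ut_K(0)=0$ forces $\Delta i_0=0$'' rests on a false premise.

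The paper closes exactly this gap by \emph{constructing} a decomposition that is valid simultaneously for all sufficiently small $t$. It starts from the $j$--filtered decomposition of the vertical slice $\widehat{\CFK}(K)=\CFK^\infty(K)_{i\le 0}/\CFK^\infty(K)_{i\le -1}$ into a trivial summand and two-term complexes $X\to Y$ (this decomposition is independent of $t$), lifts it to $\CFK^\infty(K)$, and then observes that the error term $A$ in $\partial X = Y + A$ lives at $i$--level $\le -1$, hence at strictly lower $\calf^t$--level than $Y$ once $t$ is small. A filtered change of basis $Y\mapsto Y+A$ (and a companion move eliminating $Y$ from other boundaries) then produces genuine $X\to Y$ summands with both generators at $i$--level $0$. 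From this one reads off $\Ut_K(t)=t\cdot\max(b-a)$ and, from the same data, $\text{Ord}_v(K)=\max(b-a)$. If you want to repair your argument, this construction is precisely the ``making the persistence argument precise'' step you flagged; the elegant $t=0$ shortcut does not survive without it.
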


A key result of~\cite[Proposition 7.2]{ MR4186142}  describes how the map on  $\cCFK^\infty(K)$ that is induced  by a knot cobordism  is affected by the presence of local maxima.  With this, we will be able to apply  $\Ut_K(t)$ to such problems as finding the minimum number of local maxima and local minima in concordances between knots, and in particular the problem of finding the least number of minima in a slice disk for a knot.

\begin{theorem} \label{thm:theoremintro}
If $S$ is a concordance from $K$ to $J$ with $M$ local maxima and $\Uts{K}(\tau) > \Uts{J}(\tau)$ for some $\tau\in [0,2]$, then
$M \ge \Ut_{K}(\tau)/\tau $.
\end{theorem}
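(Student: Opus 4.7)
The plan is to establish the filtered concordance inequality
\[
\Ut_K(\tau) \;\le\; \max\bigl\{\Ut_J(\tau),\ M\tau\bigr\},
\]
which specializes directly to the stated conclusion: under the hypothesis $\Ut_K(\tau) > \Ut_J(\tau)$, the maximum on the right must equal $M\tau$, so $M \ge \Ut_K(\tau)/\tau$.

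The main input will be Proposition~7.2 of~\cite{MR4186142}. It describes the $\F[u,v]$-equivariant chain map $F_S \co \cCFK^\infty(K) \to \cCFK^\infty(J)$ induced by the decorated concordance $S$: each of the $M$ local maxima contributes a factor of $v$ to $F_S$. Since multiplication by $v$ shifts the $\calf^\tau$-filtration by $-\tau/2$, this forces $F_S$ to carry $\calf^\tau_s(K)$ into $\calf^\tau_{s - M\tau/2}(J)$; contributions from saddles and from the minima of $S$ only shift $F_S$ further down in $\calf^\tau$, which is harmless for the bound we seek. The same proposition, applied to the reverse cobordism $\bar{S}$, produces a companion chain map $F_{\bar S}$, and the composition $F_{\bar S} \circ F_S$ is chain homotopic on $\cCFK^\infty(K)$ to multiplication by a power of $U = uv$, which acts injectively on the torsion-free module $\cHFK^\infty(K) \cong \F[u^{\pm 1}, v^{\pm 1}]$.

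With $\epsilon := M\tau/2$, and assuming $\Ut_J(\tau) \le 2\epsilon$ (the case in which the ``max'' is not already automatic), I would chase, for each $s$, the commutative square of chain complexes
\[
\begin{CD}
\calf^\tau_{s}(K) @>F_S>> \calf^\tau_{s-\epsilon}(J) \\
@VV \iota_K V @VV \iota_J V \\
\calf^\tau_{s + \epsilon}(K) @>F_S>> \calf^\tau_{s}(J)
\end{CD}
\]
On homology, the hypothesis on $\Ut_J(\tau)$ forces the right-hand vertical $\iota_J$ to annihilate every torsion class. For a torsion class $[x] \in H(\calf^\tau_s(K))$, its image $F_S[x]$ lies in $H(\calf^\tau_{s-\epsilon}(J))$ and is again torsion, since $F_S$ is $\F[u,v]$-equivariant and $\cHFK^\infty(J)$ is torsion-free; hence $F_S(\iota_K[x]) = \iota_J(F_S[x]) = 0$. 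Pairing this with the analogous square for $F_{\bar S}$ and exploiting the fact that $F_{\bar S}\circ F_S$ is $U^m$-equivariant, a diagram chase upgrades ``$\iota_K[x]$ lies in $\ker(F_S)$'' to $\iota_K[x] = 0$ in $H(\calf^\tau_{s+\epsilon}(K))$. As $s$ and $[x]$ were arbitrary, the image of $\iota_K$ is torsion-free, so $\Ut_K(\tau) \le 2\epsilon = M\tau$ by definition.

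The main obstacle I expect is precisely this final upgrade from ``in the kernel of $F_S$'' to ``equal to zero.'' Carrying it out rigorously requires pulling the $J$-side vanishing back to $K$ via $F_{\bar S}$, and then using the injectivity of $U^m$-multiplication on the torsion-free quotient of $\cHFK^\infty(K)$ to eliminate the $u$- and $v$-shifts introduced by the saddles and minima of $S$. A secondary technical point is checking that the factorization of $F_S$ implicit in Proposition~7.2 respects the $\calf^\tau$-filtration uniformly over all $s \in \R$ and all $\tau \in [0,2]$, so that the single $\epsilon$ chosen above works simultaneously at every filtration level.
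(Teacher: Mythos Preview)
Your target inequality $\Ut_K(\tau)\le\max\{\Ut_J(\tau),\,M\tau\}$ is the right one, and it is equivalent to what the paper establishes.  The gap is in how you read Proposition~7.2 of Juh\'asz--Miller--Zemke.  That result does \emph{not} say that each local maximum contributes a factor of $v$ to the single map $F_{\cals}$, nor that $F_{\cals}$ carries $\calf^\tau_s(K)$ into $\calf^\tau_{s-M\tau/2}(J)$, nor that $F_{\overline{\cals}}\circ F_{\cals}$ is homotopic to a power of $U$.  For a concordance the map $F_{\cals}$ is a filtered $\F[u,v]$--chain map of degree zero; what Proposition~7.2 actually yields (in the form the paper records as Corollary~\ref{cor:maps} with $g=0$) is a relation on the \emph{composition after multiplying by $v^M$}:
\[
v^{M}\cdot\bigl(F_{\cals'}\circ F_{\cals}\bigr)\ \simeq\ v^{M}\cdot\mathrm{id}_{\cCFK^\infty(K)}.
\]
One cannot cancel the $v^{M}$ here.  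Since your commutative square is built on the unavailable filtration shift of $F_{\cals}$ alone, the subsequent chase---including the ``upgrade'' step you flag as the main obstacle---has nothing to stand on.

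The paper sidesteps any filtered diagram chase.  It first reinterprets $\Ut_K(\tau)/\tau$ as the $v$--torsion order of the module $\calm_0=H\bigl(\cCFK^-(K,1/m)\bigr)$ (Theorem~\ref{thm:order1}).  The displayed homotopy then says precisely that $v^{M}\calm_1=v^{M}\calm_0$, where $\calm_1$ is the image of $\calm_0$ under $F_{\cals'}\circ F_{\cals}$; since this composite factors through $H\bigl(\cCFK^-(J,1/m)\bigr)$, one has $\mathrm{Ord}(\calm_1)\le\Ut_J(\tau)/\tau$.  The elementary identity $\mathrm{Ord}(x^{M}\calm)=\max\{0,\mathrm{Ord}(\calm)-M\}$ (Theorem~\ref{thm:alg}) then forces either $\Ut_K(\tau)/\tau\le M$ or $\mathrm{Ord}(\calm_0)=\mathrm{Ord}(\calm_1)\le\Ut_J(\tau)/\tau$; the hypothesis $\Ut_K(\tau)>\Ut_J(\tau)$ rules out the second alternative.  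Converting the filtration question into a module torsion--order question is exactly what lets the relation $v^{M}(F_{\cals'}\circ F_{\cals})\simeq v^{M}\,\mathrm{id}$ be used directly, with no need to control $F_{\cals}$ by itself.
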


\smallskip

\noindent Note that by turning the concordance upside down, one gets a similar statement for local minima.  We have the following immediate corollary.

\begin{corollary} If $K$ is a  slice knot in $S^3$, then the number of local minima in slice disk for $K$ is greater than   $\Uts{K}(\tau)/\tau $ for all $\tau >0$.
\end{corollary}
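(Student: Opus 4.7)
The plan is to deduce this from the upside-down form of Theorem~\ref{thm:theoremintro} mentioned just before the corollary, which asserts that for a concordance $S \co J \to K$ with $M$ local minima and $\Uts{K}(\tau) > \Uts{J}(\tau)$, one has $M \ge \Uts{K}(\tau)/\tau$. The task is then to convert a slice disk for $K$ into such a concordance from the unknot to $K$.

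First I would take a slice disk $D \subset B^4$ for $K$ realizing $M$ local minima with respect to a Morse function $r$ on $B^4$ (so $M$ counts the index~$0$ critical points of $r|_D$), pick one such local minimum $p$, and let $B^4_\epsilon(p)$ be a small $4$-ball fitting inside a Morse chart around $p$. Removing $B^4_\epsilon(p)$ from $B^4$ and identifying the complement with $S^3 \times [0,1]$ (with $K$ at $t=1$ and a small unknot $U$ at $t=0$), the surface $D \setminus B^4_\epsilon(p)$ becomes a concordance $A \co U \to K$ with exactly $M-1$ local minima: one index~$0$ critical point of $r|_D$ has been excised and no new critical points are introduced inside the Morse chart.

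Next I would observe that $\Uts{U}(\tau) = 0$ for all $\tau \in [0,2]$. This follows because $\cCFK^\infty(U)$ is simply the free $\F[u,v]$--module $\F[u,u^{-1},v,v^{-1}]$ with zero differential, so every filtered piece $H(\calf^t_s)$ already has the form $\F[u,v]$ with no torsion to be killed, giving $\epsilon = 0$ in the definition of $\Ut$.

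Finally, applying the upside-down Theorem~\ref{thm:theoremintro} to the concordance $A$, for each $\tau \in (0,2]$ with $\Uts{K}(\tau) > 0 = \Uts{U}(\tau)$ we obtain $M - 1 \ge \Uts{K}(\tau)/\tau$, and therefore $M > \Uts{K}(\tau)/\tau$; for $\tau$ with $\Uts{K}(\tau) = 0$ the bound is vacuous. The main technical point will be confirming that the local excision near $p$ really yields a Morse-theoretic annulus with exactly one fewer index~$0$ critical point and no additional ones, which I expect to follow from a careful application of the Morse Lemma in a neighborhood of $p$ together with the identification of $B^4 \setminus B^4_\epsilon(p)$ with $S^3 \times [0,1]$.
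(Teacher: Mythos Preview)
Your proposal is correct and is exactly the argument the paper has in mind when it calls this an ``immediate corollary'': puncture the slice disk at one index~$0$ critical point to obtain a concordance from the unknot $U$ to $K$ with one fewer minimum, use $\Uts{U}\equiv 0$, and apply the upside-down form of Theorem~\ref{thm:theoremintro}. The only small remark is that your ``vacuous'' case $\Uts{K}(\tau)=0$ actually asserts $M>0$, which is automatic since any slice disk has at least one local minimum.
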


\smallskip

\begin{example}    We let $K= T(9,11)$ and $J= T(9,13)$.  Figure~\ref{fig:T9-11-13} illustrates the graphs of $\Ut_K(t)/t$ and $\Ut_J(t)/t$.    The first interest in this pair is that the values of  $\text{Ord}_v(K) $ and  $\text{Ord}_v(J) $ are equal  but the  torsion Upsilon functions are distinct.

To build on this example, we note that the knots $K' = K \cs -K$ and $J' = J\cs -J$ are slice, and thus are concordant.  Theorem \ref{thm:props} implies that the torsion Upsilon functions of $K'$ and $J'$ are identical to those of $K$ and $J$, respectively.    We have that $\Ut_K(t) > \Ut_J(t)$ on the interval containing $1/2$;  considering the values, we see that a concordance from $K'$ to $J'$ must have at least 6 local maxima.  For $t$ close to 1 we have  $\Ut_J(t) > \Ut_K(t)$; based on the values we see that any concordance from $K'$ to $J'$ must have at least 4 local minima.
\end{example}

\begin{figure}[h]
\includegraphics[scale=.55]{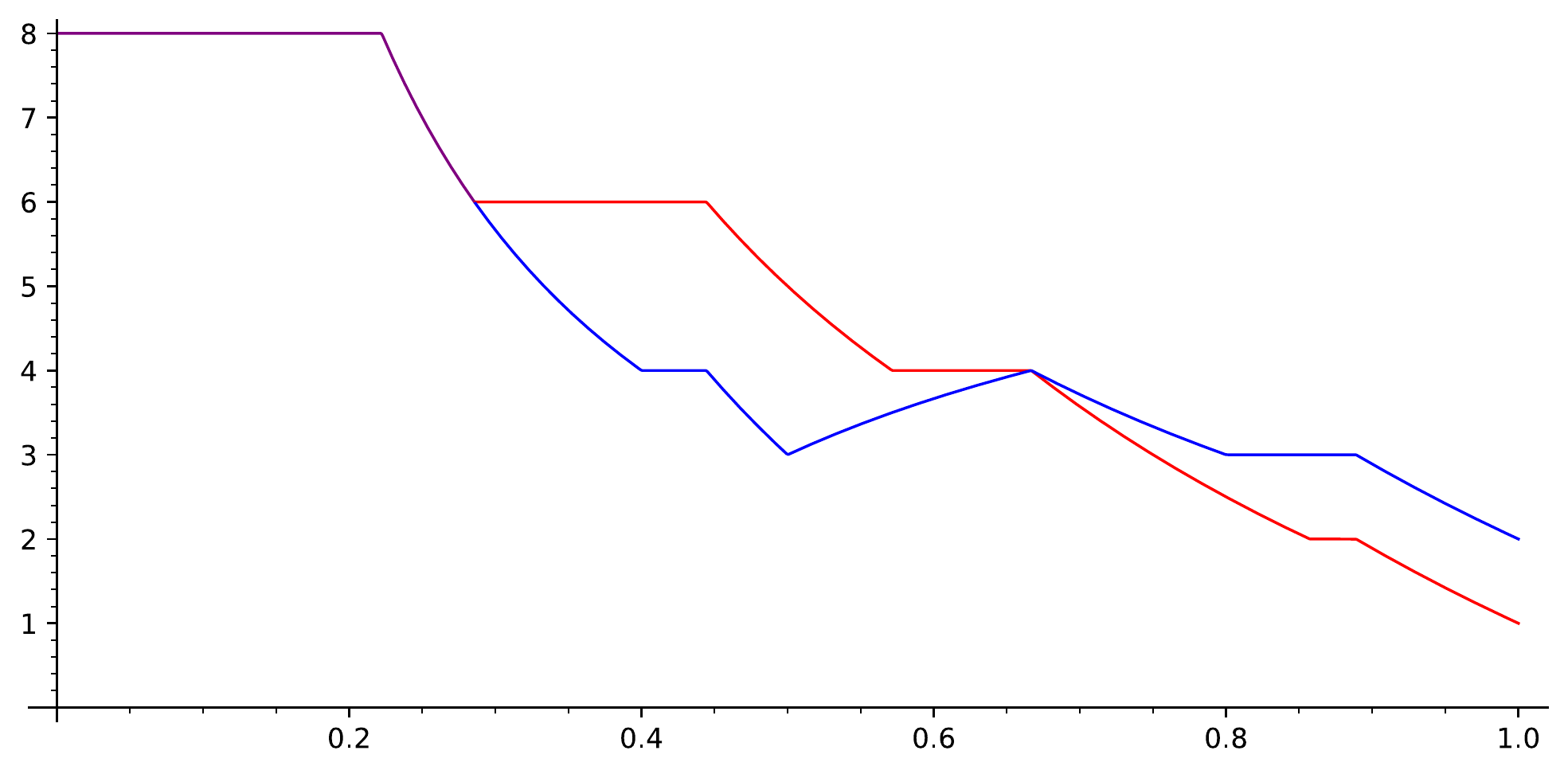}
\caption{For $K=T(9,11)$ and $J=T(9,13)$, , the  graphs of $\Uts{K}(t) /t  $  and $\Uts{J}(t)/t$.}
\label{fig:T9-11-13}
\end{figure}

At a different extreme we can consider knot cobordisms with no local maxima or minima.  As an application, such cobordisms arise from crossing change sequences between two knots.  We have the following.

\begin{theorem} \label{thm:crossingchange} Suppose that $K_0$ can be converted to $K_1$ via a sequence of $c^-$ negative crossing changes and $c^+$ positive crossing changes.  Let $c = \max \{ c^-, c^+\}$.  Then $c \ge  \max \big|  \Ut_{K_0}(t)/ t - \Ut_{K_1}(t)/ t \big| /2$.
\end{theorem}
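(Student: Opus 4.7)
The plan is to realize the crossing change sequence as a cobordism from $K_0$ to $K_1$ with controlled numbers of local maxima and minima, and then apply Theorem~\ref{thm:theoremintro}. The factor of $1/2$ in the stated bound will come from arranging that each single crossing change, positive or negative, is realized by a sub-cobordism contributing exactly one local maximum and one local minimum.

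The first step is to construct such a cobordism $S$. Any single crossing change between two knots is realized by a standard genus-one cobordism consisting of two saddles (no extrema). I modify this by inserting a cancelling birth-saddle pair at the beginning (a small unknot is born and immediately absorbed into the knot by a trivial band) and a dual cancelling saddle-death pair at the end (a trivial band splits off a small unknot, which is then capped off). Neither insertion changes the ambient isotopy class of the cobordism, so the result is still an embedded genus-one surface realizing the single crossing change, but now with exactly one local maximum, one local minimum, and four saddles. Concatenating these modified realizations for all $c^+ + c^-$ crossing changes produces a cobordism $S$ from $K_0$ to $K_1$ with $M = c^+ + c^-$ local maxima and $m = c^+ + c^-$ local minima.

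Next I apply Theorem~\ref{thm:theoremintro} to $S$. At any $\tau \in (0,2]$ with $\Ut_{K_0}(\tau) > \Ut_{K_1}(\tau)$, the theorem gives $M \ge \Ut_{K_0}(\tau)/\tau$, so $c^+ + c^- \ge \Ut_{K_0}(\tau)/\tau$. Because $c = \max\{c^+, c^-\} \ge (c^+ + c^-)/2$, this yields $c \ge \Ut_{K_0}(\tau)/(2\tau)$. Turning $S$ upside down (exchanging the roles of $K_0, K_1$ and of maxima, minima) and applying Theorem~\ref{thm:theoremintro} again yields the symmetric bound $c \ge \Ut_{K_1}(\tau)/(2\tau)$ whenever $\Ut_{K_1}(\tau) > \Ut_{K_0}(\tau)$. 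Using the non-negativity of $\Ut_K$ from Theorem~\ref{thm:props}(1), in either case we obtain $c \ge |\Ut_{K_0}(\tau)/\tau - \Ut_{K_1}(\tau)/\tau|/2$, and taking the maximum over $\tau$ completes the proof.

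The main obstacle is the geometric realization step—verifying that the cancelling birth-saddle and saddle-death insertions really can be carried out globally without changing the knot-type of the boundary. A more subtle point is that Theorem~\ref{thm:theoremintro} is stated for concordances, whereas the cobordism $S$ produced above has positive genus $c^+ + c^-$; so one must either invoke (or establish) an extension of Theorem~\ref{thm:theoremintro} to cobordisms of arbitrary genus, or else factor $S$ through intermediate knots so that the concordance version can be applied at each step.
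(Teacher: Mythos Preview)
Your argument has a genuine gap, and it is not just the one you flag at the end.

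First, the spurious birth--saddle and saddle--death insertions accomplish nothing. Theorem~\ref{thm:theoremintro} gives a \emph{lower} bound on the number of maxima in a given cobordism; inflating that number by adding cancellable critical points does not help, and in any case the theorem still requires the surface to be a concordance. Your surface has genus $c^+ + c^-$, so the theorem does not apply, exactly as you note.

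Second, even if you invoke the correct positive--genus analogue proved in Section~\ref{sec:bordism} (for a genus $g$ cobordism with $M$ local maxima one obtains $M+2g \ge \Ut_{K_0}(t)/t - \Ut_{K_1}(t)/t$, and the symmetric bound with minima), your construction gives $g=c^++c^-$ and the bound becomes $|\Ut_{K_0}(t)/t-\Ut_{K_1}(t)/t|\le 2(c^++c^-)\le 4c$. That is only $c\ge |\cdot|/4$, which is strictly weaker than the stated inequality.

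What you are missing is the geometric point that drives the paper's proof: a \emph{pair of oppositely signed} crossing changes can be realized by a single genus--one cobordism built from two oriented band moves and \emph{no} extrema (Figure~\ref{fig:crossingchanges}). After padding the shorter list with nugatory crossing changes so that $c^-=c^+=c$, one pairs them off and concatenates, obtaining a genus~$c$ cobordism from $K_0$ to $K_1$ with no local maxima or minima. The no--maxima theorem of Section~\ref{sec:bordism} then gives $2c=2g\ge|\Ut_{K_0}(t)/t-\Ut_{K_1}(t)/t|$, which is exactly the desired bound. The factor of $1/2$ thus comes from the pairing trick, not from the inequality $c\ge (c^++c^-)/2$.
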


\begin{example} Figure~\ref{fig:T6-7-10} illustrates the  graph of differences of $\Uts{K}(t)/t$ for the knots $T(7,8)$ and $T(5,12)$.   We see that any sequence of crossing changes from one to the other has at least two changes.  Notice that the results of Juh\'{a}sz-Miller-Zemke \cite{MR4186142} using the invariant  $\text{Ord}_{v}(K)$ would give the weaker  lower bound of one.  Such bounds could be obtained more easily with classical invariants.  In Section~\ref{sec:bordism} we will see that in fact we are bounding the minimal genus of a cobordism without maxima or minima that joins these two knots.
\end{example}

\begin{figure}[h]
\includegraphics[scale=.3]{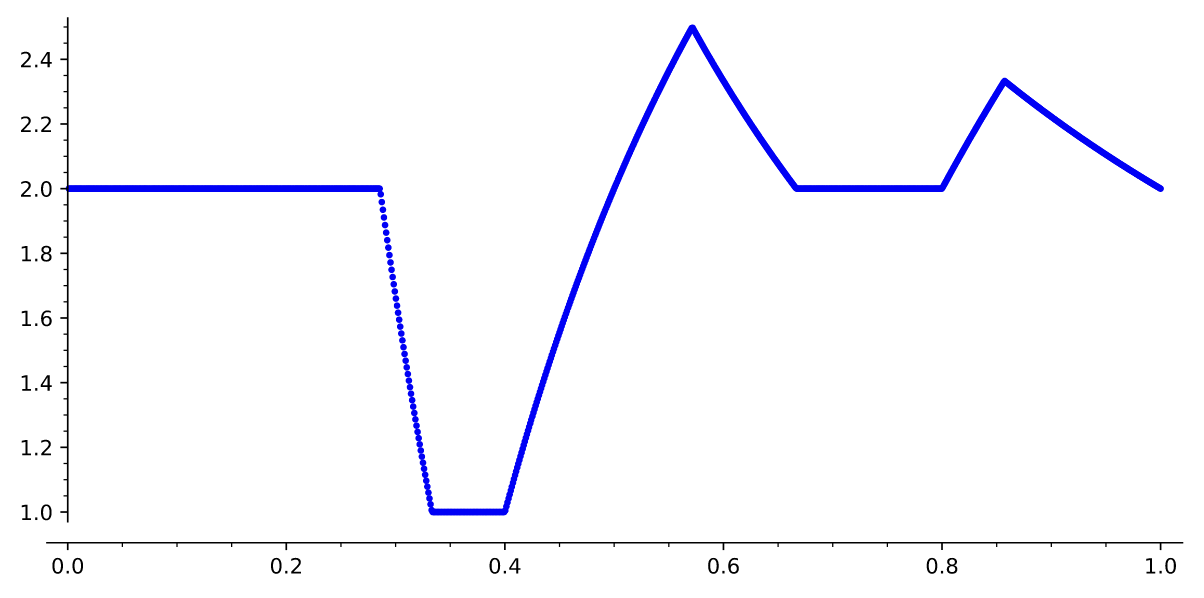}
\caption{   $\Uts{K}(t)/ t - \Uts{J}(t)/ t$   for $K= T(7,8)$  and $J=T(5,12)$ .}
\label{fig:T6-7-10}
\end{figure}

\begin{example} In the case of $\Uts{K}(t)/t$ for $K = T(9,11) $ as illustrated in  Figure~\ref{fig:T6-7-10} the advantage of $\Ut_K(t)$ is in considering such questions as finding the minimum number of local minima in a slice disk for knots built from torus knots.  However, when focusing on concordances between knots, it is the difference between these torsion Upsilon functions and, again as shown in Figure~\ref{fig:T6-7-10}, the maximum of the difference need not be near 0.
\end{example}

\smallskip

\noindent{\bf Acknowledgments.}  We appreciate the help of Akram Alishahi, Jen Hom, Marco Marengon, and Ian Zemke.


\section{A brief review of the Heegaard Floer complexes $\cCFK^\infty(K)$ and $\CFK^\infty(K)$}

In this section we include only the necessary background; more details can be found in, for example, \cite{MR3905679}.  A   Heegaard diagram   for $S^3$
determines a set of intersection points $\cali$. A base point $z$ yields a   function, the {\it absolute grading}, $\gr_z \co \cali \to \Z$.  An ordered pair of basepoints $(z,w)$ determines a knot $K\subset S^3$ and a function $A\co \cali \to \Z$ defined by $A(\bfx) = (\gr_w(\bfx) - \gr_z(\bfx))/2$.

The complex $\cCFK^-(K)$ is an $\F[u,v]$--chain complex, freely generated over $\F[u,v]$ by the set $\cali$.  We do not write down the boundary map here.   The  $\F[u,v]$--chain complex  $\cCFK^\infty(K)$ is   defined by $\cCFK^\infty(K) = \cCFK^-(K) \otimes \F[u, u^{-1}, v, v^{-1}]$.  Elements are represented by finite sums of terms of the form $u^i v^j \bfx $ with $i, j \in \Z$, $\bfx \in \cali$.

We let $U = uv$ and write $\cCFK_U^\infty(K)$  for   $\cCFK^\infty(K)$ viewed as an $\F[U]$--chain complex.  Then we have the direct sum decomposition
\[
\cCFK_U^\infty(K) = \oplus_k\  \cCFK_U^\infty(K)_{\{k\}}
\]
where $ \cCFK_U^\infty(K)_{\{k\}}$ is the $\F[U]$--subcomplex generated by terms $u^iv^j \bfx$ satisfying $A(\bfx) + j - i = k$.
The variable $v$ acts on the collection of these $\F[U]$--complexes, giving (ungraded) isomorphisms
$v \cCFK_U^\infty(K)_{\{k\}}=  \cCFK_U^\infty(K)_{\{k+1\}}$.

There is an identification of the Ozsv\'ath-Szab\'o  complex $\CFK^\infty(K)$, generated over $\F$ by terms $[\bfx,i,j]$ satisfying  $A(\bfx) - j + i = 0$, with
$\cCFK(K)_{\{0\}}$.  The identification is induced by the map  $[\bfx , i, j ] \to u^{-i}v^{-j}\bfx$.

There are two gradings on  $\cCFK^\infty(K)$ determined by the following functions on $\F$--generators:   $\gr_z(u^iv^j\bfx) = \gr_z(\bfx) - 2j$ and $\gr_w(u^iv^j\bfx) = \gr_w(\bfx) - 2i$.   These agree on   $\cCFK_{\{0\}}(K)$ and differ by $k$ on $  \cCFK_U^\infty(K)_{\{k\}}$.  Corresponding to this, there is a single grading, $\gr$, on $\CFK^\infty(K)$ given by $\gr([\bfx,i,j]) = \gr_z(\bfx) + 2i = \gr_w(\bfx) +2j$.  The action of $U$ on $\CFK^\infty(K)$ lowers gradings by $2$.


\section{The filtration $\calf^t_s$ and Upsilon torsion.}

For the moment we restrict to working with rational numbers.  For any  $t \in [0,2] \cap \Q$ we define an increasing filtration ${\calf}^t_s$ on $ \cCFK^\infty(K)$   parametrized by real numbers $s$, where   ${\calf}^t_s$ is generated over $\F[u,v]$ by elements $u^iv^j\bfx$ satisfying   $\frac{t}{2}j + \big( 1 - \frac{t}{2}\big)i \ge -s$.

The intersections $\widehat{\calf}^t_s = {\calf}^t_s  \cap \CFK^\infty(K)$ defines an increasing filtration of the $\F[U]$--complex $\CFK^\infty(K)$:   $\widehat{\calf}^t_s$ is generated by $[\bfx,i,j]$ where   $\frac{t}{2}j + \big( 1 - \frac{t}{2}\big)i \le s$ (as well as $A(\bfx) - j +i = 0$).  (In~\cite{MR3604374} the notation was slightly different, with $\calf^t_s$ there being the same as  what we denote $\widehat{\calf}^t_s$ here.)

We now present the basic definitions.   By restricting to rational values of $t$, it should be evident that the use of {\it minimum} instead of {\it infimum} is justified. The fact that everything mentioned is well-defined follows most easily from Theorem~\ref{thm:oned}.

\begin{definition}  For an  element $a \ne 0  \in \cCFK^\infty(K)$, the {\it filtration level}  of $a$ is the minimum $s$ such that $a \in \calf^t_s$.  If $t = \frac{m}{n}$, then the filtration level will be $\frac{r}{2n}$ for some $r \in \Z$.
\end{definition}

\begin{definition}  For each $t$, an element $a \in \calf^t_s$ is called {\it Upsilon torsion}  in $\cCFK^\infty(K)$  if $[a] \ne 0 \in H(\calf^t_s)$ and  $[a]= 0 \in H(\calf^t_{s +\epsilon})$ for some $\epsilon >0$.  Here  $H(\calf^t_{s })$ denotes the (unfiltered) homology of the complex.
\end{definition}

\begin{definition}  The {\it Upsilon order} of an  Upsilon torsion element $a \in \cCFK^\infty(K)$ at filtration level  $s$ is
the minimum $\epsilon$ such
that $[a] = 0 \in H(\calf^t_{s +\epsilon})$.
\end{definition}

\begin{definition}  For each $t \in [0,2]\cap \Q$, the {\it Upsilon torsion function} $\Upsilon^{\sf Tor}_K(t)$ is defined to be {\it twice} the maximum order among all Upsilon torsion elements.   (That the maximum exists is a consequence of Lemma~\ref{lem:decomp}, the proof of which is presented in the appendix.)
\end{definition}

\smallskip
\noindent{\bf Comments.}    The map $v \co \cCFK^\infty(K)_{\{k\}} \to \cCFK^\infty(K)_{\{k+1\}}$ preserves the differences between filtration levels.  Thus, $\Ut_K(t)$ could have been defined in terms of the induced filtration on  $ \cCFK^\infty(K)_{0}$.  That is, in defining $\Ut_K(t)$ we could have used $\CFK^\infty(K)$ and its filtration $\widehat{\calf}^t_s$.  However, maps induced by cobordisms do not preserve $\CFK^\infty(K)$ and this  makes the definition using $\cCFK^\infty(K)$ more practical.
\smallskip

\begin{lemma}~\label{lem:decomp} Let $U = uv$.  There is an  $\F[U]$--equivariant filtered change of basis that expresses the filtered $\F[U]$--chain complex   $\{ \CFK^\infty (K), \widehat{\calf}^t_s\}$ as the finite direct sum of a complex   of the form ${\text{\rm C}}  \otimes \F[U, U^{-1}]$ and  complexes of the form  ${\text{\rm D}}_\alpha  \otimes \F[U, U^{-1}]$  where:

\begin{itemize}
\item   ${\text{\rm C}}  $ has one generator over $\F$ and trivial boundary map.
\item ${\text{\rm D}}_\alpha  $ has two  generators over $\F$ and is of the form $X \to Y$, where  $Y$ represents a  torsion class and its order $\alpha$ is   the difference of the filtration levels of $X$ and $Y$.
\end{itemize}
\end{lemma}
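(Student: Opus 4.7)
The overall strategy is a Barannikov-style filtered cancellation argument, carried out in an $\F[U]$-equivariant fashion. First, restrict attention to the $\F[U]$-subcomplex $\CFK^-(K) \subset \CFK^\infty(K)$ generated over $\F[U]$ by the finite set $\cali$, so that $\CFK^\infty(K) = \CFK^-(K) \otimes_{\F[U]} \F[U, U^{-1}]$; any $\F[U]$-equivariant decomposition of the former induces an $\F[U, U^{-1}]$-equivariant decomposition of the latter. The filtration function $\phi^t([\bfx,i,j]) = \frac{t}{2}j + (1 - \frac{t}{2})i$ satisfies $\phi^t(U \cdot a) = \phi^t(a) - 1$, so $U$ acts by a uniform filtration shift; for rational $t = m/n$ all values and differences of $\phi^t$ lie in $\frac{1}{2n}\Z$, ensuring the minima invoked below are attained.

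Next, expand the differential in the $\F[U, U^{-1}]$-basis $\cali$ of $\CFK^\infty(K)$ as $\partial \bfx = \sum_{\bfy} f_{\bfx,\bfy}(U)\,\bfy$ with $f_{\bfx,\bfy}(U) \in \F[U]$. Each monomial $U^k \bfy$ appearing represents a filtered arrow of jump $\alpha = \phi^t(\bfx) - \phi^t(U^k \bfy) \ge 0$. The cancellation proceeds inductively: choose a pair $(\bfx, U^k \bfy)$ of minimal $\alpha$, perform the $\F[U, U^{-1}]$-linear change of basis replacing $U^k \bfy$ by $\partial \bfx$ (and absorbing appropriate multiples of $\bfx$ into any other generator $\bfz$ whose differential involves $\bfy$), and split off the resulting pair as a direct summand of the form ${\text{D}}_\alpha \otimes \F[U, U^{-1}]$. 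Since $\cali$ is finite, the procedure terminates after finitely many steps; the remaining generators, which carry no surviving differential arrows among themselves, contribute direct summands of the form ${\text{C}} \otimes \F[U, U^{-1}]$.

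The main obstacle is to ensure that, throughout the induction, the change of basis is simultaneously filtered, $\F[U]$-equivariant, and produces genuine direct summands rather than mere subquotients. Equivariance is automatic since every operation is $\F[U, U^{-1}]$-linear. The filtered property is preserved by the greedy strategy of processing arrows of smallest jump first, which guarantees that modifications cannot introduce new arrows of smaller jump falling outside the existing $\widehat{\calf}^t_s$. That each cancellation yields a direct summand follows from the standard cancellation lemma: after the basis change, the differential restricted to $\langle \bfx, \bfy' \rangle$ is an isomorphism and this pair splits off with $\F[U, U^{-1}]$-equivariant complement. Finally, the total homology $H(\cCFK^\infty(K)) \cong \F[u^{\pm 1}, v^{\pm 1}]$ restricts on the $\CFK^\infty(K)$-summand to a single $\F[U, U^{-1}]$-tower, so exactly one summand of the form ${\text{C}} \otimes \F[U, U^{-1}]$ must appear in the decomposition, as claimed.
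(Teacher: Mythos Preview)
Your cancellation argument is correct and self-contained: greedily choosing an arrow of minimal filtration jump, replacing the target generator by $U^{-k}\partial\bfx$, and then clearing $\bfy'$ from the remaining differentials is exactly the Barannikov normal-form procedure, and your verification that all modifications are filtered (via the minimality of $\alpha$) and that $\bfx$ cannot survive in any other differential (via $\partial^2=0$) closes the loop.

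The paper proceeds differently. Rather than cancel one arrow at a time, it first invokes \cite{MR3604374} to split $\CFK^\infty(K)$ (after removing the single free tower) into ``zig-zag'' summands---alternating chains $c_1 \leftarrow d_1 \rightarrow c_2 \leftarrow d_2 \rightarrow \cdots$ with prescribed filtration heights---and then decomposes each zig-zag into two-step pieces $D_\alpha$ by a short sequence of explicit filtered basis changes (pushing the highest $d_i$ to one end, the lowest $c_j$ to the other, then peeling off an extremal pair). Your approach is more direct and avoids the intermediate zig-zag classification, at the cost of being less concretely pictorial; the paper's approach leverages existing structure theory and makes the resulting $\alpha$'s visible as edge-lengths in the zig-zag diagram, which is convenient for the worked examples that follow.
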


\begin{proof}  The proof is presented in the appendix. \end{proof}

An immediate consequence is the next theorem.

\begin{theorem} \label{thm:oned} There is an  $\F[u,v]$--equivariant filtered change of basis that expresses the filtered $\F[u,v]$--chain complex   $\{ \cCFK^\infty (K),  {\calf}^t_s\}$ as the finite direct sum of a  complex   of the form ${\text{\rm C}} \otimes \F[u,u^{-1},v,v^{-1}]$ and the direct sum of complexes   ${\text{\rm D}}_\alpha \otimes \F[u,u^{-1},v,v^{-1}]$ where:

\begin{itemize}
\item   ${\text{\rm C}} $ has one generator over $\F$ and trivial boundary map.
\item ${\text{\rm D}}_\alpha $ has two  generators over $\F$ and is of the form $X \to Y$.   In this case $Y$ represents a  torsion class and its order is   $\alpha$,  the difference of the filtration levels of $X$ and $Y$.
\end{itemize}

\end{theorem}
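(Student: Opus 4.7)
The plan is to derive Theorem~\ref{thm:oned} from Lemma~\ref{lem:decomp} by propagating the given decomposition of the $\{0\}$-summand $\CFK^\infty(K)\cong\cCFK_U^\infty(K)_{\{0\}}$ across all $v$-shifts to obtain a decomposition of the full complex.

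First, I would fix the identification $[\bfx,i,j]\mapsto u^{-i}v^{-j}\bfx$ between $\CFK^\infty(K)$ and $\cCFK_U^\infty(K)_{\{0\}}$, observing that, up to the sign difference in the inequalities, it carries the filtration $\widehat{\calf}^t_s$ onto the restriction of $\calf^t_s$ to the $\{0\}$-piece; in particular pairwise differences of filtration levels are preserved. Applying Lemma~\ref{lem:decomp} then produces a finite $\F[U]$-equivariant filtered decomposition of $\cCFK_U^\infty(K)_{\{0\}}$ into summands of the form ${\text{C}}\otimes\F[U,U^{-1}]$ and ${\text{D}}_\alpha\otimes\F[U,U^{-1}]$.

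Next, for each such summand $\mathcal{S}$ of the $\{0\}$-piece, I would extend it to the full complex by setting $\widetilde{\mathcal{S}}=\bigoplus_{k\in\Z} v^k\mathcal{S}$. Since multiplication by $v$ restricts to a filtered $\F[U]$-linear isomorphism $\cCFK_U^\infty(K)_{\{k\}}\to\cCFK_U^\infty(K)_{\{k+1\}}$ that shifts every filtration level by the uniform amount $-t/2$, each piece $v^k\mathcal{S}$ carries over the same $\F$-basis and the same pairwise differences of filtration levels as $\mathcal{S}$. The relation $u=Uv^{-1}$ shows that $u\cdot v^k\mathcal{S}\subseteq v^{k-1}\mathcal{S}$, so $\widetilde{\mathcal{S}}$ is an $\F[u,v]$-subcomplex. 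Identifying $\F[u,u^{-1},v,v^{-1}]$ with $\F[U,U^{-1}]\otimes_\F\F[v,v^{-1}]$ via $u^av^b\leftrightarrow U^a\otimes v^{b-a}$ then exhibits $\widetilde{\mathcal{S}}$ as precisely ${\text{C}}\otimes\F[u,u^{-1},v,v^{-1}]$ or ${\text{D}}_\alpha\otimes\F[u,u^{-1},v,v^{-1}]$, with the filtration-level difference $\alpha$ unchanged.

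Summing over the finitely many summands supplied by the lemma yields the required decomposition of $\cCFK^\infty(K)$. The only genuinely substantive step is verifying the uniform filtration-shift property of $v$, which follows directly from the formula $-\frac{t}{2}j-(1-\frac{t}{2})i$ for the filtration level of $u^iv^j\bfx$: this depends linearly and separately in $i$ and $j$, so multiplication by $v$ adds the same constant $-t/2$ to every filtration level. All other verifications amount to tracking the $\F[u,v]$-module structure along the identifications above, and I do not anticipate any serious obstacle.
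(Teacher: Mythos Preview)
Your proposal is correct and matches the paper's approach: the paper states Theorem~\ref{thm:oned} as ``an immediate consequence'' of Lemma~\ref{lem:decomp} without writing a separate proof, and what you have done is precisely to spell out that deduction, using the $v$-shift isomorphisms $\cCFK_U^\infty(K)_{\{k\}}\cong v^k\cCFK_U^\infty(K)_{\{0\}}$ and the fact (noted in the paper's Comments) that $v$ preserves differences of filtration levels.
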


\subsection{Viewing $\Ut_K(t)$ as a function of $t \in \R$.}  In Section~\ref{sec:PL}  we will prove that $\Ut_K(t)$ is piecewise linear.  Since we have  defined $\Ut_K(t)$ only in the case that  $t \in [0,2] \cap \Q$, that proof applies only for $t\in \Q$.  However, a piecewise linear function on $[0,2] \cap \Q$ extends uniquely to such a function on $[0,2] \subset \R$.  This observation permits us to avoid the technical issues that appear in generalizing the definition given above to the real setting.


\section{Torsion order}

We would like to interpret $\Ut_K(t)$ in terms of the action of $v$ on $\cCFK^t(K)$.  Let  $t = \frac{m}{n}$.  In this case, all filtration levels and Upsilon orders of Upsilon torsion elements are rational  with values $\frac{r}{2n}$ for some $r$.   The action of $v$ lowers filtration levels by $\frac{m}{2n}$.

We form the complex $\cCFK^\infty(K) \otimes_{\F[v]} \F[v^{1/ m}]$ which we  abbreviate $\cCFK^\infty(K, 1/m)$.  The $\calf^t$ filtration extends to  $\cCFK^\infty(K, 1/m)$, where now in the formula  $\frac{t}{2}j + \big( 1 - \frac{t}{2}\big)i \ge -s$ the values of $i$ and $j$ can be rational numbers with denominator $m$.  We continue to denote this filtration as ${\calf^t}$.  The maximum order of a torsion element is the same as for $\cCFK^\infty(K)$, that is, $ \Ut_K(t)/2$.

We have the subcomplex  $   \cCFK^-(K, 1/m)$ generated by elements of filtration level nonpositive.   This is a $\F[v^{1/m}]$--module.  Since $v^{1/m}$ lowers filtration levels by $(t/2)/ m= 1/2n$, if a nontrivial torsion element exists, then  there is such an  element in  $   \cCFK^-(K, 1/m)$ of filtration level $0$ and of Upsilon order $\Ut_K(t)/2$.

\begin{theorem}  \label{thm:order1} The $\F[v^{1/m}]$-torsion submodule of the homology group $H(  \cCFK^-(K, 1/m))$ is annihilated by $v^M$ if and only if $M \ge \Ut_K(t)/t$.
\end{theorem}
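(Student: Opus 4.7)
The plan is to reduce the statement to a summand-by-summand analysis using the decomposition of Theorem~\ref{thm:oned}, then verify the claim on each summand. Since $\F[v^{1/m}]$ is flat over $\F[v]$, tensoring the filtered $\F[u,v]$-decomposition of Theorem~\ref{thm:oned} with $\F[v^{1/m}]$ expresses $\cCFK^\infty(K,1/m)$ as a filtered direct sum of summands of types $\text{C} \otimes \F[u^{\pm 1}, v^{\pm 1}, v^{1/m}]$ and $\text{D}_\alpha \otimes \F[u^{\pm 1}, v^{\pm 1}, v^{1/m}]$, where by the definition of the Upsilon torsion function the maximum $\alpha$ appearing equals $\Ut_K(t)/2$. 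Restricting to the subcomplex $\cCFK^-(K,1/m)$ and taking homology, the $\F[v^{1/m}]$-torsion submodule splits along this decomposition, so it suffices to treat each summand in isolation.

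The C-summands contribute nothing to the torsion: each such piece is generated over $\F$ by a single cycle with trivial differential, and its restriction to $\cCFK^-(K,1/m)$ is $\F[v^{1/m}]$-free because $v^{1/m}$ strictly lowers filtration, so the cycles in the subcomplex form a half-ray in the $j$-direction for each fixed $i$.

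The heart of the argument is a $\text{D}_\alpha$-summand $X \to Y$ with $f_X = f_Y + \alpha$. A nonzero $\F[v^{1/m}]$-torsion class from this summand is represented by a cycle $y = v^{j/m} u^i Y$ lying in $\cCFK^-$ whose unique preimage $x = v^{j/m} u^i X$ sits at positive filtration level $s_X = s_y + \alpha$. Since $v^M$ lowers filtration by $Mt/2$, we have $v^M [y] = 0$ in $H(\cCFK^-(K,1/m))$ precisely when $v^M x \in \cCFK^-$, equivalently $s_X \le Mt/2$. The supremum of $s_X$ over allowable classes in this summand is $\alpha$, and it is attained: the filtration levels realized by the generators $u^i v^{j/m} Y$ form the coset $f_Y + (1/(2n))\Z$ in $\R$ (since $u$ shifts filtration by $(2n-m)/(2n)$ and $v^{1/m}$ by $1/(2n)$, which together generate $(1/(2n))\Z$), and $f_Y$ itself lies in this subgroup, so one may choose $(i,j)$ with $s_y = 0$ exactly. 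Hence the maximal $v^M$-order of a torsion class from the summand equals $2\alpha/t$.

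Combining the previous two steps, $v^M$ annihilates the entire $\F[v^{1/m}]$-torsion submodule of $H(\cCFK^-(K,1/m))$ if and only if $M \ge 2\alpha/t$ for every $\alpha$ in the decomposition, equivalently $M \ge 2\alpha_{\max}/t = \Ut_K(t)/t$. The most delicate step will be verifying that the supremum $s_X = \alpha$ is actually realized within a single summand; this attainment is the reason for passing to the refined complex $\cCFK^\infty(K, 1/m)$, since the coarser lattice of filtration levels in $\cCFK^\infty(K)$ alone might miss filtration level zero and thereby understate the torsion order.
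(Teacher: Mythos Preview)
Your proof is correct and follows essentially the same approach as the paper's: both rely on the $\text{C}/\text{D}_\alpha$ decomposition and the computation that $v^M$ shifts the $\calf^t$--filtration by $Mt/2$. The paper's argument is terser, invoking (from the paragraph preceding the theorem) the existence of a torsion element of filtration level exactly $0$ and Upsilon order $\Ut_K(t)/2$; you make this attainment explicit via the lattice argument showing the filtration levels of $u^i v^{j/m}Y$ fill out $(1/(2n))\Z$, which is exactly the point the paper leaves implicit.
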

\begin{proof}   Consider a  cycle $c \in   \cCFK^-(K, 1/m)$   of filtration level 0  and of  Upsilon order   $\Ut_K(t)/2$.   Since $v^{1/m}$ lowers filtration levels by $1/2n$, to  be annihilated by $(v^{1/m})^k$ we need to have $k(1/2n) \ge   \Ut_K(t)/2 $; that is  $k \ge  n\Ut_K(t)$.

If $v^M$ annihilates   $H(  \cCFK^-(K, 1/m))$, then writing $v^M =\big( v^{1/m} \big)^{mM}$ we require $mM \ge n\Ut_K(t)$.  This can be rewritten as $M \ge \frac{n}{m} \Ut_K(t) = \Ut_K(t)/t$.
\end{proof}

\begin{definition}  Suppose that $\calm$ is a $\F[x]$--module, that $x^m \cdot c = 0 $ for all $\F[x]$--torsion elements $c \in \calm$, and that $m$ is the least exponent for which $x^m \cdot c = 0 $ for all such $c$.  Then we will  write  $\text{Ord}_{\F[x]}(\calm) = m$ and refer to it as the {\it torsion order} of $\calm$.

\end{definition}

With this, Theorem~\ref{thm:order1} can be restated as the following.

\begin{corollary} For $t = m/n$, the torsion order of the homology group $H(\cCFK^-(K, 1/m))$  under the action of $\F[v^{1/m}]$ is $\Ut_K(t)/t$.
\end{corollary}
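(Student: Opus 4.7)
The plan is to obtain the corollary as a direct reformulation of Theorem~\ref{thm:order1} in the language of torsion order just introduced. Theorem~\ref{thm:order1} asserts that $v^M$ annihilates every $\F[v^{1/m}]$-torsion element of $H(\cCFK^-(K,1/m))$ precisely when $M \ge \Ut_K(t)/t$, so the minimal such exponent $M$ equals $\Ut_K(t)/t$. Unwinding the definition of $\text{Ord}_{\F[v^{1/m}]}$---the least exponent under which the generator of the polynomial ring annihilates the torsion submodule---immediately identifies this minimum with the claimed torsion order, once one harmonizes the two natural exponent scales.

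The only subtlety is the bookkeeping between integer and rational exponents. An integer power $(v^{1/m})^k$ annihilates the torsion exactly when the corresponding power $M = k/m$ of $v$ satisfies $M \ge \Ut_K(t)/t$, i.e.\ $k \ge n\Ut_K(t)$. Using the identification $v = (v^{1/m})^m$ and measuring exponents in the $v$-grading that matches the filtration (so that one application of $v^{1/m}$ contributes exponent $1/m$), the minimum exponent is $\Ut_K(t)/t$, matching the statement. I expect no substantive obstacle beyond this notational alignment: the analytic content was already supplied by Theorem~\ref{thm:order1}, whose proof exhibited a filtration-level-$0$ cycle of Upsilon order $\Ut_K(t)/2$ realizing the bound, and the corollary just rephrases that conclusion in the torsion order notation.
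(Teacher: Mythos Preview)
Your proposal is correct and matches the paper's treatment exactly: the paper presents this corollary as an immediate restatement of Theorem~\ref{thm:order1} in the torsion-order language, with no separate proof given. Your observation about the exponent bookkeeping (integer powers of $v^{1/m}$ versus rational powers of $v$) is the only thing to check, and you handle it correctly.
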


\subsection{General torsion results}
Suppose that $\calm$ is a $\F[x]$--module.  Then clearly
\[\text{Ord}_{\F[x]}(x^k \calm) = \max \{ 0, \text{Ord}_{\F[x]}(  \calm) -k\}.\]
Thus,  for modules $\calm_1$ and $\calm_0$, if  we have
\[\text{Ord}_{\F[x]}(x^k \calm_1)  =   \text{Ord}_{\F[x]}(x^j \calm_0),\]
then
\[\  \max \{ 0, \text{Ord}_{\F[x]}(  \calm_1) -k\} =    \max \{ 0, \text{Ord}_{\F[x]}(  \calm_0) -j\}. \]

We will use  two special cases.

\begin{theorem}\label{thm:alg}  Let $\text{Ord}_{\F[x]}(  \calm_1) = \calo_1$ and $\text{Ord}_{\F[x]}(  \calm_0) = \calo_0$.
\begin{enumerate}
\item If $x^M \calm_1= x^M \calm_0$ and $\calo_1 \le \calo_0$ then $M\ge \calo_0$.

\item  If $  \calm_1= x^{2g} \calm_0$, then $\calo_1 \le \calo_0$ and $2g \ge \calo_0 - \calo_1$.  \end{enumerate}

\end{theorem}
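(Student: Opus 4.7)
The plan is to derive both parts as direct arithmetic consequences of the general formula
\[
\text{Ord}_{\F[x]}(x^k \calm) \;=\; \max\{0,\, \text{Ord}_{\F[x]}(\calm) - k\}
\]
recalled at the start of the subsection, applied to the modules $\calm_0$ and $\calm_1$.

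For part (1), the hypothesis $x^M \calm_1 = x^M \calm_0$ forces $\text{Ord}_{\F[x]}(x^M \calm_1) = \text{Ord}_{\F[x]}(x^M \calm_0)$, which via the formula becomes the single identity
\[
\max\{0,\, \calo_1 - M\} \;=\; \max\{0,\, \calo_0 - M\}.
\]
I would then argue by contradiction: assume $M < \calo_0$. Then the right-hand side equals $\calo_0 - M > 0$, so the left-hand side is also positive, forcing $\calo_1 > M$ and $\calo_1 - M = \calo_0 - M$, whence $\calo_1 = \calo_0$. Combined with the hypothesis $\calo_1 \le \calo_0$ this pins the two orders to the same value; taking contrapositives, whenever the inequality $\calo_1 \le \calo_0$ is strict we obtain $M \ge \calo_0$. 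In the boundary equality case $\calo_1 = \calo_0$ the identity is trivially satisfied and places no constraint on $M$, which is harmless because part (1) is invoked later (e.g.\ in Theorem~\ref{thm:theoremintro}) only under a strict separation of the two torsion orders, so the informative content of the conclusion $M \ge \calo_0$ is precisely what is needed.

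For part (2), substituting $\calm_1 = x^{2g} \calm_0$ directly into the same general formula yields
\[
\calo_1 \;=\; \max\{0,\, \calo_0 - 2g\},
\]
from which $\calo_1 \le \calo_0$ is immediate. Moreover,
\[
\calo_0 - \calo_1 \;=\; \calo_0 - \max\{0,\, \calo_0 - 2g\} \;=\; \min\{\calo_0,\, 2g\} \;\le\; 2g,
\]
establishing the inequality $2g \ge \calo_0 - \calo_1$. The main obstacle here is not algebraic but interpretive: the equality case $\calo_1 = \calo_0$ in part (1) yields no nontrivial lower bound on $M$, so the statement is meaningful precisely when the two torsion orders are strictly separated. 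All remaining steps are mechanical once the general formula for $\text{Ord}_{\F[x]}(x^k \calm)$ is in hand.
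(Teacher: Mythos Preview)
Your approach is exactly the one the paper intends: it states the general formula $\text{Ord}_{\F[x]}(x^k\calm)=\max\{0,\text{Ord}_{\F[x]}(\calm)-k\}$ and then simply declares the theorem to be ``two special cases,'' so you are filling in precisely the arithmetic the authors leave implicit. Your observation about part~(1) is also correct and worth making explicit: as literally written, the conclusion $M\ge\calo_0$ can fail when $\calo_1=\calo_0$ (take $\calm_1=\calm_0$ and $M=0$), and what actually follows from the hypotheses is the dichotomy ``either $M\ge\calo_0$ or $\calo_1=\calo_0$''; since the applications (Theorem~\ref{thm:theoremintro}) assume a strict separation $\Uts{K}(\tau)>\Uts{J}(\tau)$, this is indeed harmless there.
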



\section{Bordism maps}\label{sec:bordism}

Let $S$ be a cobordism  in $S^3 \times [0,1]$   from a knot $K_0$ to a knot $K_1$.   Choose  disjoint paths on $S$ joining the  knots that split the cobordism into surfaces $ S_1$ and $S_2$.  Together these define a {\it decorated cobordism} $\cals$ from $K_0$ to $K_1$.  (In most references, what we denote $\cals$ is denoted $\calf$, but this conflicts with the usual symbol $\calf$ used for filtrations.)  Such a decorated  cobordism induces a map
\[ F_\cals \co  \cCFK^\infty(K_0) \to  \cCFK^\infty(K_1).\]
According to \cite[Theorem 1.7]{MR3950650}  the map restricts to give
\[ F_\cals \co  \cCFK^\infty(K_0)_{k}  \to  \cCFK^\infty(K_1)_{k + g(S_2) - g(S_1)}.\]
In particular, in the case of a knot concordance, the genus of each surface is $0$, so there is an induced self-map       $\CFK^\infty(K_0) \to \CFK^\infty(K_1)$.  We will denote this map by the same name,  $F_\cals$.

We now quote  from  \cite[Proposition 7.2]{MR4186142}, first   tensoring with $\F[u,u^{-1}, v, v^{-1}]$ so that we   use   $\cCFK^\infty(K)$ rather than $\cCFK^-(K)$.

\smallskip

\begin{proposition}Let $S$ be a  conbordism from $K_0$ ot $K_1$ with $M$ local maxima.  Suppose that
\[ s + t = M, \hskip.4in p+q = M +2g(S), \hskip.4in  s\le p, \hskip.2in \text{and} \hskip.2in t \le q. \]
Then for some choice of decoration $\calf$ of $S$  and for the mirrored cobordism $S'$ we have
\[ u^sv^t \cdot F_{\cals'}\circ F_{\cals} \simeq  u^p v^q \cdot \text{\rm id}_{\cCFK^\infty(K_0)}  .   \]
\end{proposition}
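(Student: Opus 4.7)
The plan is to derive the proposition directly from Proposition~7.2 of Juh\'asz-Miller-Zemke~\cite{MR4186142}, which establishes the corresponding chain homotopy at the level of $\cCFK^-(K_0)$, and then to transfer the conclusion to $\cCFK^\infty(K_0)$ by tensoring with $\F[u,u^{-1},v,v^{-1}]$. Concretely, the cobordism-induced map $F_\cals$ on $\cCFK^-$ is natural and $\F[u,v]$-equivariant, so tensoring with the localization yields a well-defined map $F_\cals \co \cCFK^\infty(K_0) \to \cCFK^\infty(K_1)$ agreeing with the usual extension in Zemke's setup. Any $\F[u,v]$-equivariant chain homotopy produced by JMZ then tensors to an $\F[u,u^{-1},v,v^{-1}]$-equivariant chain homotopy of exactly the asserted form, and the numerical hypotheses $s+t=M$, $p+q=M+2g(S)$, $s\le p$, $t\le q$ pass through unchanged. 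This tensoring step contributes no new ideas beyond flatness of the localization over $\F[u,v]$.

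For reference, the structural content of the JMZ argument is as follows. The cobordism $S$ is decomposed into elementary handles: $M$ birth (index~$0$) handles corresponding to the local maxima, a collection of saddle (index~$1$) handles, and no death handles. A suitable decoration $\cals$ specifies the placement of the basepoints $z$ and $w$ near each birth. Composing with the mirrored decoration $\cals'$ pairs each birth in $\cals$ with a death in $\cals'$; the core computation is that each such cancelling pair contributes a factor of $u^a v^b$ with $a+b=1$, where the split $(a,b)$ is governed by the decoration, while the saddle cancellations that build the genus together contribute a factor of $(uv)^{g(S)}$. Summing exponents produces total degree $M+2g(S)$, matching $p+q$, and the flexibility encoded in the inequalities $s\le p$, $t\le q$ reflects the freedom to absorb additional factors of $U=uv$ using the $\F[u,v]$-equivariant null-homotopies already present in $\cCFK^-$.

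The principal obstacle in reproving the statement from scratch is bookkeeping: one must verify that every admissible quadruple $(s,t,p,q)$ satisfying the stated constraints is realized by some choice of decoration, and that the chain homotopies compose correctly across the entire handle decomposition. Since~\cite{MR4186142} executes this argument in full detail, the cleanest approach here is to quote their result verbatim and record the localization step that passes from $\cCFK^-(K_0)$ to $\cCFK^\infty(K_0)$.
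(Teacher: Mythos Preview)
Your proposal is correct and mirrors exactly what the paper does: the proposition is not proved independently but is quoted from \cite[Proposition~7.2]{MR4186142}, with the only addition being the passage from $\cCFK^-(K_0)$ to $\cCFK^\infty(K_0)$ by tensoring with $\F[u,u^{-1},v,v^{-1}]$. Your supplementary sketch of the JMZ handle-by-handle argument is more than the paper provides, but the essential strategy---quote and localize---is identical.
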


As a special case, we let $s = p = 0$.

\smallskip

\begin{corollary}\label{cor:maps}  Suppose that    $S$ is a  genus $g$ cobordism  from $K_0$ ot $K_1$ with $M$ local maxima.  Then there is an associated decorated cobordism $\cals$ such that for  the mirrored cobordism  $\cals'$
\[ v^M \cdot F_{\cals'}\circ F_{\cals} \simeq  v^{M+2g} \cdot \text{\rm id}_{\cCFK^\infty(K_0)}  .   \]
\end{corollary}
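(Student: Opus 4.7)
The plan is to obtain the corollary as the direct specialization of the preceding proposition to the case $s = p = 0$. No new ideas are required; the task is essentially bookkeeping with the inequalities and powers of $u, v$.

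First I would verify that the numerical hypotheses of the proposition are satisfied under this choice. With $s = 0$ and $p = 0$, the equation $s + t = M$ forces $t = M$, and $p + q = M + 2g(S)$ forces $q = M + 2g$. The inequality $s \le p$ reduces to $0 \le 0$, and $t \le q$ becomes $M \le M + 2g$, which holds since $g = g(S) \ge 0$. Thus all four hypotheses of the proposition are met.

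Next I would substitute these values into the conclusion of the proposition. The left-hand side $u^s v^t \cdot F_{\cals'} \circ F_{\cals}$ becomes $u^0 v^M \cdot F_{\cals'} \circ F_{\cals} = v^M \cdot F_{\cals'} \circ F_{\cals}$, and the right-hand side $u^p v^q \cdot \mathrm{id}_{\cCFK^\infty(K_0)}$ becomes $u^0 v^{M+2g} \cdot \mathrm{id}_{\cCFK^\infty(K_0)} = v^{M+2g} \cdot \mathrm{id}_{\cCFK^\infty(K_0)}$. The proposition guarantees the existence of a decoration $\cals$ of $S$ making this chain homotopy hold, which is exactly the conclusion of the corollary.

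There is no substantive obstacle in this argument: the only point to check is that $g(S) \ge 0$, which is automatic, so the constraint $t \le q$ never fails. Accordingly the proof reduces to a single line citing the proposition with $s = p = 0$.
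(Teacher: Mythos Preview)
Your proposal is correct and matches the paper's own approach exactly: the paper derives the corollary simply by specializing the preceding proposition to the case $s = p = 0$, and your bookkeeping with $t = M$, $q = M + 2g$ is precisely what that specialization entails.
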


We note that the maps on the left and right are compositions of three and two homomorphisms:

\[ \cCFK^\infty(K_0)   \xrightarrow {F_\cals}  \cCFK^\infty(K_1)   \xrightarrow{F_{\cals'}}   \cCFK^\infty(K_0) \xrightarrow{v^M}  \cCFK^\infty(K_0)  \]
and
\[  \cCFK^\infty(K_0)   \xrightarrow{ \text{Id}}   \cCFK^\infty(K_0) \xrightarrow{v^{M+2g} } \cCFK^\infty(K_0).  \]

Let $\calm_0 = H( \cCFK^-(K_0, 1/m))$ and let $\calm_1 =  F_{\cals'} \circ F_{\cals} (\calm_0) \subset   \cCFK^-(K_0, 1/m)$.
Since the map   $F_{\cals'}\circ F_{\cals} $ factors through  $H(\cCFK^-(K_1, 1/m)))$, we have  $\text{Ord}_{F [v^{1/m}]}(\calm_1) \le  \text{Ord}_{F [v^{1/m}]} (H(\cCFK^-(K_1, 1/m)))$.

We now have two theorems that follow from Theorem~\ref{thm:theoremintro}.  The first is the same as Theorem~\ref{thm:theoremintro} of the introduction.    

\begin{theorem}  If there is a concordance from $K_0$ to $K_1$ with $M$ local maxima, then $M \ge \Ut_{K_0}(t)/t -  \Ut_{K_1}(t)/t.$
\end{theorem}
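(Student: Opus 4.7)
The plan is to combine Corollary~\ref{cor:maps} (applied with $g=0$, since $S$ is a concordance) with the torsion-order calculation of Theorem~\ref{thm:order1} and the algebraic principle of Theorem~\ref{thm:alg}(1).  Fix a rational $t = m/n \in [0,2]$.  Corollary~\ref{cor:maps} supplies a decorated cobordism $\cals$ whose mirror $\cals'$ satisfies
\[ v^M \cdot F_{\cals'}\circ F_{\cals}\ \simeq\ v^M \cdot \text{id}_{\cCFK^\infty(K_0)}. \]
I would pass to $\cCFK^\infty(K_0,1/m)$, restrict to the filtration subcomplex $\cCFK^-(K_0,1/m)$, and take homology.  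Setting $\calm_0 = H(\cCFK^-(K_0,1/m))$ and $\calm_1 = (F_{\cals'}\circ F_{\cals})_*(\calm_0) \subset \calm_0$, the chain homotopy above descends to the submodule identity $v^M \calm_1 = v^M \calm_0$.

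Next I would read off the torsion orders.  Theorem~\ref{thm:order1} gives $\text{Ord}_{\F[v^{1/m}]}(\calm_0) = \Ut_{K_0}(t)/t$, in the convention of measuring orders by integer powers of $v$.  Because $F_{\cals'}\circ F_{\cals}$ factors through $H(\cCFK^-(K_1,1/m))$, whose torsion order is $\Ut_{K_1}(t)/t$, the image $\calm_1$ inherits a torsion order of at most $\Ut_{K_1}(t)/t$.  Denote these orders by $\calo_0$ and $\calo_1$, respectively.

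I would finish with a short case split.  If $\Ut_{K_0}(t)/t > \Ut_{K_1}(t)/t$, then $\calo_1 \le \Ut_{K_1}(t)/t < \Ut_{K_0}(t)/t = \calo_0$, so Theorem~\ref{thm:alg}(1) forces $M \ge \calo_0 = \Ut_{K_0}(t)/t \ge \Ut_{K_0}(t)/t - \Ut_{K_1}(t)/t$.  Otherwise $\Ut_{K_0}(t)/t \le \Ut_{K_1}(t)/t$, and the claimed inequality holds trivially since its right-hand side is nonpositive.  The statement then holds for rational $t$ and extends to all $t \in [0,2]$ by the piecewise-linear extension observation from Section~3.1.

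The delicate step is the descent from $\cCFK^\infty(K_0)$ to $\cCFK^-(K_0,1/m)$: one must verify that the cobordism maps $F_\cals$ and $F_{\cals'}$ respect the filtration $\calf^t$ sufficiently to restrict to this subcomplex, so that the $\cCFK^\infty$-level homotopy produces an honest identity of submodules of $\calm_0$ upon passing to homology.  Granted that compatibility, the remainder of the argument is just the two algebraic inputs already set up in Sections~4 and~5.
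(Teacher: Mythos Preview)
Your proposal is correct and follows essentially the same approach as the paper: the paper sets up precisely the modules $\calm_0 = H(\cCFK^-(K_0,1/m))$ and $\calm_1 = (F_{\cals'}\circ F_{\cals})_*(\calm_0)$, notes the factoring through $H(\cCFK^-(K_1,1/m))$ to bound $\text{Ord}(\calm_1)$, and then invokes Theorem~\ref{thm:alg} exactly as you do. Your write-up is in fact more explicit than the paper's, which simply declares that the result ``follows,'' and your flagging of the filtration-compatibility step is an honest acknowledgment of a point the paper leaves implicit.
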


\begin{theorem}  If there is a genus $g$ cobordism from from $K_0$ to $K_1$ with no local maxima, then $2g \ge \big| \Ut_{K_0}(t)/t -  \Ut_{K_1}(t)/t \big|$.
\end{theorem}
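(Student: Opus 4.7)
The plan is to adapt the argument of the preceding theorem (for concordances with $M$ local maxima), using Corollary~\ref{cor:maps} with $M = 0$ in place of the general version, and invoking part~(2) of Theorem~\ref{thm:alg} in place of part~(1). First, applying Corollary~\ref{cor:maps} with $M = 0$ produces, for a suitable decoration $\cals$ of $S$,
\[
F_{\cals'} \circ F_\cals \;\simeq\; v^{2g} \cdot \text{id}_{\cCFK^\infty(K_0)}.
\]
Writing $t = m/n$ and setting $\calm_0 = H(\cCFK^-(K_0, 1/m))$ and $\calm_0' = H(\cCFK^-(K_1, 1/m))$, the induced map on homology restricts so that $\calm_1 := (F_{\cals'} \circ F_\cals)_*(\calm_0) = v^{2g} \cdot \calm_0 \subseteq \calm_0$. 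Since this composition factors through $\calm_0'$, we have $\text{Ord}(\calm_1) \le \text{Ord}(\calm_0') = \Ut_{K_1}(t)/t$, via the corollary to Theorem~\ref{thm:order1}.

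Applying Theorem~\ref{thm:alg}(2) with $x = v^{1/m}$ to the pair $(\calm_0, \calm_1)$ then immediately gives $2g \ge \text{Ord}(\calm_0) - \text{Ord}(\calm_1) \ge \Ut_{K_0}(t)/t - \Ut_{K_1}(t)/t$.

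For the reverse inequality, the same argument must be run on $\cals'$ viewed as a cobordism from $K_1$ to $K_0$. If $\cals'$ carries $M'$ local maxima, Corollary~\ref{cor:maps} yields $v^{M'} \cdot F_\cals \circ F_{\cals'} \simeq v^{M' + 2g} \cdot \text{id}_{\cCFK^\infty(K_1)}$. Since $v$ acts invertibly on $\cHFK^\infty(K_1) \cong \F[u, u^{-1}, v, v^{-1}]$, the factor $v^{M'}$ cancels at the level of homology, and a symmetric application of Theorem~\ref{thm:alg}(2) to $\calm_0'$ gives $2g \ge \Ut_{K_1}(t)/t - \Ut_{K_0}(t)/t$. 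Combining the two inequalities yields the claimed bound on the absolute value.

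The main obstacle is making the symmetric step rigorous at the subcomplex level: the identity $v^{M'} \cdot F_\cals \circ F_{\cals'} \simeq v^{M' + 2g} \cdot \text{id}$ holds on $\cCFK^\infty(K_1)$, so cancelling $v^{M'}$ is only immediate on $\cHFK^\infty(K_1)$, whereas the torsion-order comparison is required on $\calm_0' = H(\cCFK^-(K_1, 1/m))$. Verifying that the resulting identity descends to give $\calm_1' = v^{2g} \cdot \calm_0' \subseteq \calm_0'$ factoring through $\calm_0$ is the most delicate point, and is essentially parallel to the subtlety handled in the discussion immediately preceding the theorem.
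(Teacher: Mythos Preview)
Your argument for the inequality $2g \ge \Ut_{K_0}(t)/t - \Ut_{K_1}(t)/t$ is correct and is exactly the approach the paper sets up: with $M=0$, Corollary~\ref{cor:maps} gives $F_{\cals'}\circ F_\cals \simeq v^{2g}\cdot\mathrm{id}$, so $\calm_1 = v^{2g}\calm_0$, and Theorem~\ref{thm:alg}(2) together with the factoring through $\calm_0'$ yields the bound.

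The gap is in your treatment of the reverse inequality. Your proposed fix---applying Corollary~\ref{cor:maps} to the reversed cobordism with its $M'$ local maxima and then cancelling $v^{M'}$ using invertibility of $v$ on $\cHFK^\infty(K_1)$---does not work, for exactly the reason you flag: the torsion-order comparison lives on $H(\cCFK^-(K_1,1/m))$, where multiplication by $v$ is \emph{not} injective (the $v$--torsion there is the entire content of the invariant). So cancelling $v^{M'}$ is illegitimate at the level where it is needed, and the identity $\calm_1' = v^{2g}\calm_0'$ cannot be deduced this way. Nothing in the relation $F_{\cals'}\circ F_\cals \simeq v^{2g}\,\mathrm{id}_{K_0}$ alone bounds $\calo_0'-\calo_0$ from above.

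What actually closes the gap is a genuinely symmetric input: one needs a relation of the form $F_\cals\circ F_{\cals'} \simeq v^{2g}\,\mathrm{id}_{K_1}$ (up to the same homotopy), and the quoted form of Proposition~7.2 only delivers this when the reversed cobordism itself has no local maxima, i.e.\ when $S$ has no local minima. In the paper's intended application (Theorem~\ref{thm:crossingchange} via the Gordian-distance construction) the cobordism has neither maxima nor minima, so reversing it and rerunning your first paragraph verbatim with the roles of $K_0$ and $K_1$ swapped gives the other inequality cleanly. For the theorem exactly as stated you should either invoke that stronger hypothesis or appeal directly to the symmetric form of \cite[Proposition~7.2]{MR4186142}, rather than attempting to cancel powers of $v$ after the fact.
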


The next section relates this result to the Gordian distance, leading to a proof of Theorem~\ref{thm:crossingchange} in the introduction.

\subsection{The Gordian distance}

\begin{definition}$  $
\begin{enumerate}
\item  The {\it Gordian distance} $d(K_0, K_1)$ between knots $K_0$ and $K_1$ is the minimum number of crossing changes that are required to convert $K_0$ into $K_1$.
\item  The {\it signed Gordian distance} $d_s(K_0, K_1)$ is the minimum value of $\max\{ c^-, c^+\}$ for all pairs $(c^-, c^+)$ for which $K_0$ can be converted into $K_1$ using $c^-$ negative crossing changes and $c^+$ positive crossing changes.
\end{enumerate}
\end{definition}

Clearly $d_s(K_0, K_1)   \le  d (K_0, K_1)$.  We have the following result.

\begin{theorem} For any pair of knots $K_0$ and $K_1$ there exists a cobordism $S$ with no local maxima or minima joining $K_0$ to $K_1$ for which the genus of $S$ is $d_s(K_0,K_1)$.

\end{theorem}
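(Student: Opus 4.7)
Let $c^-, c^+$ realize $d_s(K_0,K_1) = \max\{c^-, c^+\}$, and without loss of generality assume $c^+ \ge c^-$. The plan is to construct a cobordism with no max/min of genus $c^+$ explicitly, using the Euler-characteristic identity that a cobordism between knots with no max/min and $s$ saddles has genus $s/2$; thus it suffices to produce a cobordism with exactly $2c^+$ saddles.

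The basic building block is the standard fact that any single crossing change is realized by a genus-1 cobordism with two bands: at the crossing, attach one band resolving the crossing into a two-component intermediate link, then a second band that rejoins the two strands in the reversed over/under configuration. The main step is the following combination lemma: a pair consisting of one positive and one negative crossing change, occurring at distinct crossings $c_1$ and $c_2$ of the same knot, can be realized by a single genus-1 cobordism using only two bands (rather than the four that would result from naive stacking). The construction performs one band at $c_1$ that resolves that crossing into a two-component intermediate link, and a second band at $c_2$ chosen so that it simultaneously inverts the crossing at $c_2$ and recombines the two components back into a single-component knot. The opposite signs of the two crossing changes are precisely what enables this simultaneous realization, by providing the orientation data needed for the second band to achieve both effects at once.

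With the combination lemma in hand, I would pair $c^-$ of the positive crossing changes with all $c^-$ of the negative ones ($c^-$ paired pieces, each genus $1$) and handle the remaining $c^+ - c^-$ positive crossing changes individually ($c^+ - c^-$ pieces, each genus $1$). Concatenating produces a cobordism with no max/min of total genus $c^- + (c^+ - c^-) = c^+ = d_s(K_0, K_1)$, as required. The main obstacle is the combination lemma: verifying by direct local analysis at $c_1$ and $c_2$ that the chosen pair of bands really does realize both crossing changes simultaneously. The opposite-sign hypothesis is essential here, since the analogous two-band construction at a same-sign pair fails to recombine into the intended knot; the signs control the orientation compatibility that makes the combination possible. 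Once this local identification is carried out by direct inspection of the four possible band resolutions at each crossing, the rest of the argument is routine assembly and an Euler-characteristic count.
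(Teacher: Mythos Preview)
Your overall approach matches the paper's: the key step is exactly the combination lemma (two oriented bands realize a pair of opposite-sign crossing changes, giving a genus-1 piece), and the paper's entire proof consists of stating that fact and pointing to a figure. Your assembly of the pieces and the Euler-characteristic count are fine, and you are actually more explicit than the paper in handling the unbalanced case $c^+ > c^-$; the paper leaves this implicit (one natural way to read it is to first pad the short side with trivial crossing changes coming from Reidemeister~I kinks, then pair everything).

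There is, however, a genuine problem with your sketch of the combination lemma's construction. As you describe it, the first band performs the \emph{oriented smoothing} at $c_1$, producing a two-component link, and the second band at $c_2$ changes $c_2$ and merges the components. But an oriented smoothing \emph{eliminates} $c_1$ rather than changing it; your two-band sequence would output the knot with $c_1$ smoothed and $c_2$ changed, which is not the target. (Compare with your single-crossing building block, where the second band is placed back at $c_1$ to reintroduce the crossing with reversed sign; in your combination move that reintroduction never happens.) The correct two-band construction is the one in the paper's figure: two local oriented bands whose combined effect, after isotopy, is that \emph{both} crossings are flipped. You were right to flag this as the crux requiring direct local verification; just be aware that the specific mechanism you wrote down does not do the job, and you should check the actual picture rather than the smoothing-then-merging description.
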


\begin{proof}  A pair of oriented band moves can have the effect of performing a pair of crossing changes of opposite sign on any   pair of crossings in a diagram.  See Figure~\ref{fig:crossingchanges}.
\end{proof}

\begin{figure}[h]
\includegraphics[scale=.35]{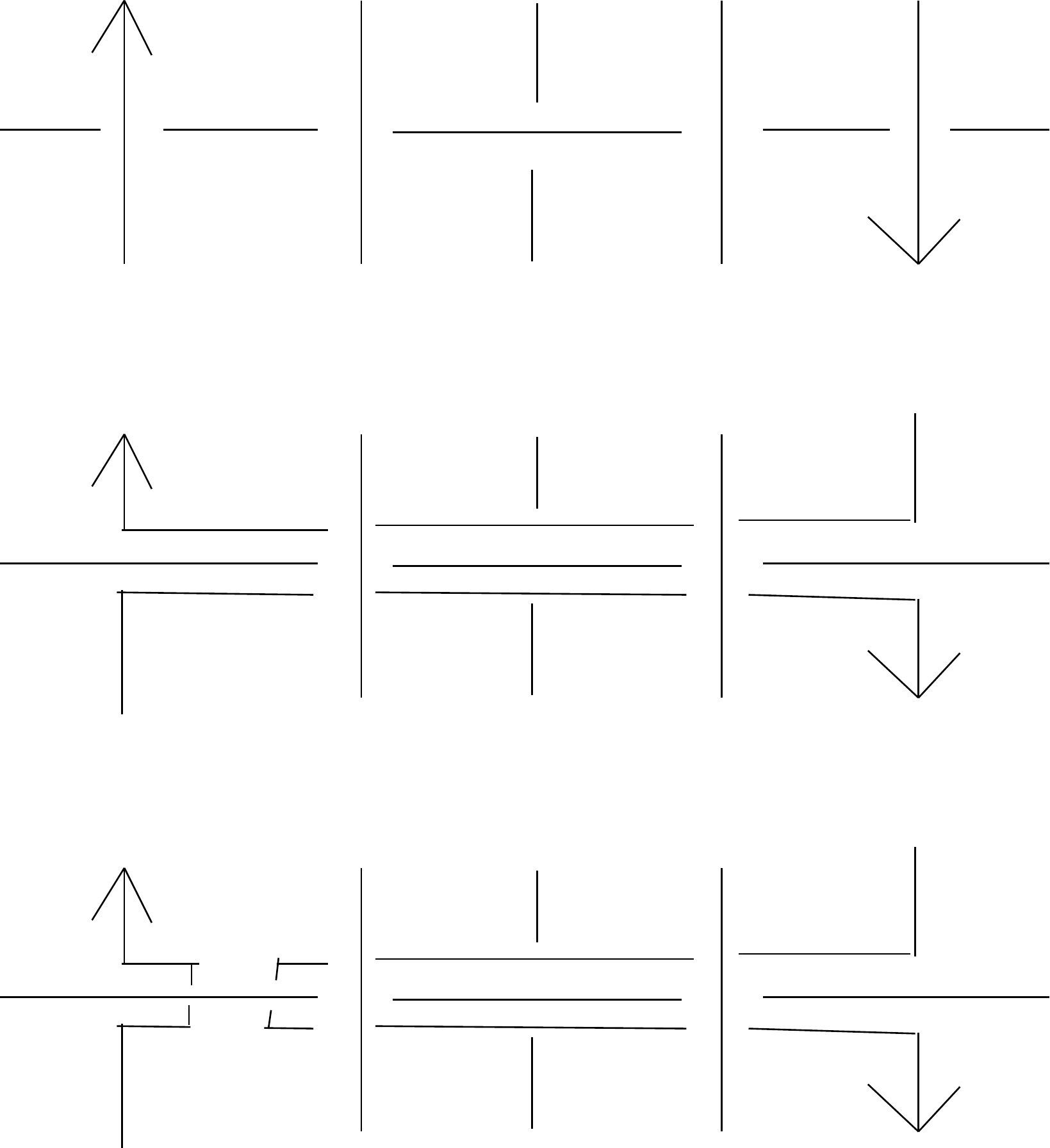}
\caption{Two band moves give two oppositely signed crossing changes}
\label{fig:crossingchanges}
\end{figure}


\section{The case of   $t$ close to $0$.}

This section is devoted to the proof of the  following result.

\begin{theorem}\label{thm:smallt} There exists and $\eta >0$ such that if $0<t<\eta$ then $\Ut_K(t) =t \text{Ord}_v(K)$.

\end{theorem}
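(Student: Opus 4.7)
The plan is to invoke Theorem~\ref{thm:oned} to decompose $\cCFK^\infty(K)$ into standard summands, and then to show that, for $t$ sufficiently close to $0$, the resulting torsion orders are linear functions of $t$ whose slopes recover the $v$-torsion structure underlying $\text{Ord}_v(K)$.

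For each rational $t \in (0,2)$, Theorem~\ref{thm:oned} yields a filtered $\F[u,v]$-change of basis expressing $\cCFK^\infty(K)$ as $C\otimes\F[u^{\pm},v^{\pm}]$ together with finitely many torsion summands $D_{\alpha_k(t)}\otimes\F[u^{\pm},v^{\pm}]$. In each torsion summand the generators $X$ and $Y$ are $\F$-linear combinations of monomials $u^iv^j\bfx$, and the filtration level of $X$ (resp.\ $Y$) is realized by a dominant monomial whose exponents I record as $(i_X,j_X)$ (resp.\ $(i_Y,j_Y)$). The order of the summand is then
\[
\alpha_k(t) = \tfrac{t}{2}(j_Y-j_X) + \bigl(1-\tfrac{t}{2}\bigr)(i_Y-i_X),
\]
with $i_Y-i_X\ge 0$ because $\partial$ cannot increase filtration level at $t=0$.

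The crucial step, and the main obstacle, is a uniformity statement: there exists $\eta>0$ and a single choice of decomposition (with fixed dominant-monomial data) valid for all $t\in[0,\eta)\cap\Q$. This should follow from a finiteness argument, since the order in which filtered Gaussian elimination pairs off generators is controlled by the linear ordering of finitely many affine expressions $\tfrac{t}{2}b + (1-\tfrac{t}{2})a$ with $a,b\in\Z$, and the set of $t$-values where two such expressions agree is discrete; I may then take $\eta$ less than the least positive such coincidence. The care required is in tracking which monomials remain dominant as $t$ sweeps through $[0,\eta)$, and in verifying that the change of basis at a single rational $t_0\in(0,\eta)$ remains filtered throughout the interval.

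With uniformity in hand, Theorem~\ref{thm:props}(2) supplies $\Ut_K(0)=0$. Setting $t=0$ in the formula reads $\alpha_k(0)=i_Y-i_X$, so $\max_k(i_Y-i_X)=0$ and hence $i_Y=i_X$ for every summand. Each surviving torsion summand is then a pure $v$-shift with $\alpha_k(t)=\tfrac{t}{2}b_k$ for $b_k:=j_Y-j_X\ge 0$, giving
\[
\Ut_K(t)\;=\;2\max_k \alpha_k(t)\;=\;t\,\max_k b_k \quad\text{for } t\in(0,\eta)\cap\Q.
\]
Each such summand $X\to v^{b_k}X$ produces a $v$-torsion class of order $b_k$ in the homology of the minus complex, so $\max_k b_k$ is precisely the Juh\'asz-Miller-Zemke invariant $\text{Ord}_v(K)$ from~\cite{MR4186142}. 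The piecewise-linear convention following Theorem~\ref{thm:oned} then extends $\Ut_K(t)=t\,\text{Ord}_v(K)$ from rationals to all $t\in(0,\eta)$, completing the proof.
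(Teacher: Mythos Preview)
Your strategy---extract a uniform $\calf^t$-decomposition on $[0,\eta)$, use $\Ut_K(0)=0$ to force linearity, then read off $\text{Ord}_v(K)$ as the slope---is natural, but two of the steps are genuine gaps rather than routine details.

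The first is the uniformity you flag yourself. Theorem~\ref{thm:oned} produces, for each fixed $t$, a filtered change of basis; nothing in that statement guarantees the same basis works across an interval. Your finiteness sketch controls the \emph{order} in which Gaussian elimination pairs generators, but it does not address whether a change of basis that is filtered at some $t_0$ remains filtered nearby: the replacement $\bfx \mapsto \bfx + z$ requires $\calf^t(z) \le \calf^t(\bfx)$, an inequality between affine functions of $t$ that can reverse on one side of $t_0$. At $t=0$ the $j$-coordinate drops out entirely and many levels coincide, so the passage to the limit you need for the step ``$\alpha_k(0)=i_Y-i_X$, hence $i_Y=i_X$'' is exactly where the dominant-monomial bookkeeping is most delicate.

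The second and more serious gap is the final identification. From $i_Y=i_X$ for the \emph{dominant monomials} you write the summand as ``$X\to v^{b_k}X$'', but $X$ and $Y$ are $\F$-linear combinations of many monomials; equality of the dominant $(i,j)$-exponents does not yield $Y=v^{b_k}X$, nor even that $Y$ lies in $v^{b_k}\cdot\cCFK^-$. More fundamentally, your $b_k$ are slopes extracted from the $\calf^t$-decomposition of Theorem~\ref{thm:oned}, whereas $\text{Ord}_v(K)$ is defined via the $j$-filtered structure of $\widehat{\CFK}(K)$ (equivalently, the $v$-action on the minus complex). You have not linked these two decompositions, and a priori they need not coincide; the assertion ``$\max_k b_k=\text{Ord}_v(K)$'' is the heart of the theorem, not a consequence of what precedes it.

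The paper's argument runs in the opposite direction and thereby sidesteps both issues. It starts from the $j$-filtered decomposition of $\widehat{\CFK}(K)$---a $t$-independent object that literally computes $\text{Ord}_v(K)$---and then shows that for $t$ small this \emph{same} decomposition upgrades to a $\calf^t$-decomposition of $\CFK^\infty(K)$. Concretely, if $X\to Y$ is one of the $j$-filtered pieces at $i$-level $0$, then in $\CFK^\infty(K)$ one has $\partial X=Y+A$ with $A$ supported in $i$-filtration $\le -1$; for $t$ sufficiently small the $\calf^t$-level of $A$ lies strictly below that of $Y$, so absorbing $A$ into $Y$ is a filtered move, and a parallel argument clears $Y$ from the boundary of any other generator. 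Uniformity is then automatic (one decomposition, built once from $t$-independent data), and the identification with $\text{Ord}_v(K)$ is immediate because the resulting torsion orders are $\tfrac{t}{2}$ times the original $j$-differences. If you want to salvage your route, the missing ingredient is precisely this: an argument that for small $t$ the $\calf^t$-decomposition can be taken to agree with the $\widehat{\CFK}$ decomposition.
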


Since the order of torsion in $\cCFK^\infty(K)$ equals that of each of its summands, $\cCFK^\infty(K)_{ \{ k\}}$, we will work entirely in $\CFK^\infty(K) = \cCFK^\infty(K)_{ \{ 0\}}$.

\subsection{The $\calf^t$--filtration}
The chain complex $\widehat{\CFK}(K) = \CFK^\infty(K)_{i \le 0} /  \CFK^\infty(K)_{i \le -1}$ is filtered by $j$.  A filtered change of basis expresses it as the direct sum of a
trivial one-generator complex  and two-generator complexes $X \to Y$.  We can view $X$ and $Y$ as equivalence classes of bifiltered generators of   $\CFK^\infty(K)$, which we can denote $X$ and $Y$.
The change of basis extends to a $\F[U]$ change of basis for $\CFK^\infty(K)$.

In   $\CFK^\infty(K)$ we have $\partial X = Y +A$ for some element $A$, where the $i$ filtration level of $A$ is at most $-1$.  For $t$ small, the $\calf^t$ filtration level of $A$ is less than that of $Y$.  There is the filtered change of basis that replaces $Y$ with $Y +A$.  Renaming this $Y$, we have $\partial X = Y \in \CFK^\infty(K)$.

The fact that $\partial^2 = 0$ implies that $\partial Y = 0$.

Suppose now   for some filtered basis element $B \ne Y$ we have $\partial B = Y  + C$.  If $t$ is small enough, then the $\calf^t$ filtration level of $ B$ is greater than that of $X$.  The filtered change of basis replacing $B$ with $B +X$ has the effect that $\partial B = C$.  Repeating this for each $B$ having  $Y$ as a component of its boundary, we can ensure that the only filtered generator that has $Y$ as a component of its boundary is $X$.

Repeating this process, we see that $\CFK^\infty(K)$ is generated over $\F[U]$ by complexes $X \to Y$ where $X$ and $Y$ are filtered generators at $i$--filtration level $0$.   If $X = [\bfx,0, a]$ and $Y= [\bfy, 0, b]$ then   the difference of the $\calf^t$ filtration levels of $X$ and $Y$ is  $\frac{t}{2}(b-a)$.

\subsection{The value of $\text{Ord}_v(K)$.}  The complex $\CFK^-(K) = \CFK^\infty(K)_{i \le 0}$.  By symmetry, we switch to viewing it as $ \CFK^\infty(K)_{j \le 0}$.  The Heegaard Floer knot homology of this complex consists of the homology groups of the successive quotients, $     \CFK^\infty(K)_{j \le 0, i\le k} /   \CFK^\infty(K)_{j \le 0, i\le k-1}$.  Using the decomposition of the quotients  $     \CFK^\infty(K)_  {i\le k} /   \CFK^\infty(K)_{  i\le k-1}$ as generated by complexes $X \to Y$   constructed as above (along with a single one generator complex), the determination of $\text{Ord}_v(K)$ comes down to determining it for each complex $X \to Y$.  This is easily seen to be the difference of the $j$--filtration levels of $X$ and $Y$; that is $(b - a)$ in the notation of the previous section, as needed to complete the proof of Theorem~\ref{thm:smallt}.


\section{Additivity}

\begin{theorem} $\Ut_{K\cs J}(t)     = \max\{  \Ut_{K }(t)  ,    \Ut_{  J}(t)   \}    $
\end{theorem}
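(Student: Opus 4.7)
The plan is to combine the structure theorem (Theorem~\ref{thm:oned}) with a connected-sum formula for the curved complex: $\cCFK^\infty(K\cs J)$ is chain homotopy equivalent, as a filtered $\F[u,v]$--complex, to the tensor product $\cCFK^\infty(K)\otimes_{\F[u,v]}\cCFK^\infty(J)$ localized at $\F[u^{\pm},v^{\pm}]$, where the $\calf^t$--filtration on a pure tensor $u^iv^j\bfx\otimes u^kv^\ell\bfy$ is the sum of the filtration levels of the two factors. Granted this, I would fix $t\in [0,2]\cap\Q$ and apply Theorem~\ref{thm:oned} to each factor to obtain filtered $\F[u^{\pm},v^{\pm}]$--decompositions
\[
\cCFK^\infty(K)\simeq \text{C}\oplus \bigoplus_i \text{D}_{\alpha_i}, \qquad
\cCFK^\infty(J)\simeq \text{C}\oplus \bigoplus_j \text{D}_{\beta_j},
\]
where $\max_i\alpha_i = \Ut_K(t)/2$ and $\max_j\beta_j = \Ut_J(t)/2$ (with the convention that the maximum is $0$ if no $\text{D}_\alpha$ summands occur).

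The tensor product distributes over direct sums, so the work reduces to understanding four types of pieces:
(i) $\text{C}\otimes \text{C}$, which is again a trivial one--generator summand;
(ii) $\text{C}\otimes \text{D}_\beta$ and $\text{D}_\alpha\otimes \text{C}$, each of which is plainly a single $\text{D}$--summand of the same torsion order $\beta$ or $\alpha$;
(iii) $\text{D}_\alpha\otimes \text{D}_\beta$, which is the one case that needs attention. Writing $\text{D}_\alpha = (X_1\to Y_1)$ and $\text{D}_\beta = (X_2\to Y_2)$ with $X$'s and $Y$'s separated by $\alpha$ and $\beta$ in $\calf^t$--level, and assuming $\alpha\le\beta$, I would perform the filtered change of basis that replaces $Y_1X_2$ and $X_1Y_2$ by $Z\defeq Y_1X_2+X_1Y_2$ together with $Y_1X_2$. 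Then $\partial X_1X_2=Z$, $\partial Z=0$, and $\partial Y_1X_2=Y_1Y_2$, so the summand splits as $\text{D}_\alpha\oplus \text{D}_\beta$; the $\calf^t$--level of $Z$ equals that of the higher-level summand $Y_1X_2$ (since $\alpha\le\beta$), giving the torsion orders $\alpha$ and $\beta$ as claimed.

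Assembling the pieces, the $\text{D}$--summands in a filtered decomposition of $\cCFK^\infty(K\cs J)$ have torsion orders drawn from $\{\alpha_i\}\cup\{\beta_j\}\cup\{\alpha_i,\beta_j\}_{i,j}$, so the maximum such order is exactly $\max\{\max_i\alpha_i,\max_j\beta_j\}=\max\{\Ut_K(t)/2,\Ut_J(t)/2\}$. Doubling gives $\Ut_{K\cs J}(t)=\max\{\Ut_K(t),\Ut_J(t)\}$ for rational $t$; the extension to all $t\in[0,2]$ follows from the piecewise linear extension noted in the paper. The main obstacle I anticipate is not the combinatorics of step~(iii) but rather justifying the filtered connected-sum formula for $\cCFK^\infty$: one must check that the tensor product model respects the grading by $k$ in $\cCFK^\infty(K)_{\{k\}}$ and that filtration levels add under the identification, so that the $\calf^t$--filtration of the tensor product agrees with the $\calf^t$--filtration transported from $\cCFK^\infty(K\cs J)$. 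Once that is in hand, the rest of the argument is purely algebraic bookkeeping on two--generator filtered pieces.
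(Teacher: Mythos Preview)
Your overall architecture matches the paper's: decompose each factor via the structure theorem, distribute the tensor product over direct sums, and analyze the pieces $\text{C}\otimes\text{C}$, $\text{C}\otimes\text{D}_\beta$, $\text{D}_\alpha\otimes\text{C}$, and $\text{D}_\alpha\otimes\text{D}_\beta$ separately. Steps (i) and (ii) are fine, and they are exactly how the paper handles those cases.

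There is, however, a genuine error in your step (iii). The change of basis you propose is \emph{not} filtered. With filtration levels normalized so that $Y_1Y_2$ sits at level $0$, the original middle pair $\{X_1Y_2,\,Y_1X_2\}$ has levels $\alpha$ and $\beta$ respectively; when $\alpha<\beta$ the subspace $\calf^t_\alpha$ in that grading is one--dimensional, spanned by $X_1Y_2$. Your replacement pair $\{Z,\,Y_1X_2\}$ has both elements at level $\beta$, so neither lies in $\calf^t_\alpha$; the element $X_1Y_2=Z+Y_1X_2$ is a cross term between your two ``summands,'' and the splitting is not a splitting of filtered complexes. In fact the filtered isomorphism type of $\text{D}_\alpha\otimes\text{D}_\beta$ is $\text{D}_{\min\{\alpha,\beta\}}\oplus\text{D}_{\min\{\alpha,\beta\}}$, not $\text{D}_\alpha\oplus\text{D}_\beta$: one checks directly that every torsion cycle in $\text{D}_\alpha\otimes\text{D}_\beta$ has Upsilon order exactly $\min\{\alpha,\beta\}$. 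The paper obtains this by the (genuinely filtered) change of basis that keeps the \emph{lower}--level middle generator $X_2Y_1$ and replaces the higher--level one $X_1Y_2$ by $X_1Y_2+X_2Y_1$, under the hypothesis $\epsilon_2\le\epsilon_1$.

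Fortunately your conclusion survives this correction: the torsion orders arising from step (iii) are all $\min\{\alpha_i,\beta_j\}\le\max\{\max_i\alpha_i,\max_j\beta_j\}$, so the maximum torsion order in the tensor product is still realized by the $\text{C}\otimes\text{D}_\beta$ and $\text{D}_\alpha\otimes\text{C}$ pieces from step (ii), exactly as the paper observes. So the fix is local: replace your analysis of (iii) by the correct filtered splitting, and the rest of your argument goes through unchanged.
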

\begin{proof}  The proof of additivity is reduced to two  cases:  (1) The tensor product of a rank 1 trivial complex $X$ and a  complexes   of the form $X_1 \to Y_1$; and  (2),  that of the tensor product of complexes   of the form $X_1 \to Y_1$ and $X_2 \to Y_2$.  Tensoring with the trivial complex does not change the Upsilon order of the corresponding Upsilon torsion element, so we need only consider the tensor product of the rank 2 complexes.   Suppose that the filtration level   of $Y_i$ is $a_i$ and the filtration level of $X_i$ is $a_i +\epsilon_i$.  Then the tensor product and the relative filtration levels are illustrated below.

\[
\begin{diagram}
\node{X_1 \otimes X_2} \arrow{e,t}{\epsilon_2} \arrow{s,l}{\epsilon_1}    \node{X_1 \otimes Y_2} \arrow{s,r}{\epsilon_1}  \\
\node{X_2 \otimes Y_1} \arrow{e,b}{\epsilon_2}    \node{Y_1 \otimes Y_2}
\end{diagram}
\]

\smallskip

Without loss of generality, assume $\epsilon_2 \le \epsilon_1$.  Then we can form a filtered change of basis, yielding the following.
\[
\begin{diagram}
\node{X_1 \otimes X_2} \arrow{e,t}{\epsilon_2}   \node{X_1 \otimes Y_2 + X_2 \otimes Y_1}  \\
\node{X_2 \otimes Y_1} \arrow{e,b}{\epsilon_2}    \node{Y_1 \otimes Y_2}
\end{diagram}
\]

\smallskip

Thus, the Upsilon orders of both Upsilon torsion elements in the tensor product are given by the minimum of the Upsilon orders from the individual complexes.  The tensor products with the trivial complex provide the summand of maximum Upsilon order.
\end{proof}


\section{Piecewise linearity}\label{sec:PL}

\begin{theorem} For each knot $K$, the function $\Ut_K(t)$ is piecewise linear on the interval $[0,2]\cap \Q$.  \end{theorem}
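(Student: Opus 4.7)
The plan is to exploit the fact that the $\calf^t$--filtration level of a fixed generator $u^i v^j \bfx$ is the linear function $-\frac{t}{2}j - (1 - \frac{t}{2})i$ of $t$, so that any difference of filtration levels between two generators is also linear in $t$. By Theorem~\ref{thm:oned}, $\Ut_K(t)/2$ is the maximum of the orders $\alpha$ of the $D_\alpha$ summands, and each such $\alpha$ is the difference of the filtration levels of the two generators of that summand. The goal is to show that as $t$ varies within $[0,2] \cap \Q$ only finitely many linear functions of $t$ can appear as such orders, so that $\Ut_K(t)/2$ is a pointwise maximum of finitely many linear functions, hence piecewise linear.

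First I would reduce to a finite setting. Since $\cCFK^-(K)$ is freely generated over $\F[u,v]$ by the finite set $\cali$ of intersection points, and since multiplication by $v$ preserves differences of filtration levels and carries $\cCFK^\infty(K)_{\{k\}}$ to $\cCFK^\infty(K)_{\{k+1\}}$, the maximum Upsilon order is realized on the finite-rank summand $\CFK^\infty(K)$ equipped with its $\widehat{\calf}^t$ filtration, via Lemma~\ref{lem:decomp}. The decomposition of that lemma is obtained by an inductive sequence of filtered changes of basis in which, at each step, one selects pivots among the finitely many generators based on which has the largest (respectively, smallest) $\widehat{\calf}^t$--filtration level, much like a persistence-barcode reduction.

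Each such choice amounts to asking which of a finite collection of linear functions $\tfrac{t}{2}(j_1-j_2) + (1-\tfrac{t}{2})(i_1-i_2)$ is maximal, so its qualitative outcome is constant on each component of $[0,2]$ minus the finite set of critical $t$--values at which two of these linear functions coincide. On each such open component the algorithm produces the same pairs $(X,Y)$ of generators, so each $\alpha_i$ is a fixed linear function of $t$ on that component, and $\Ut_K(t)/2 = \max_i \alpha_i(t)$ is a maximum of finitely many linear functions and hence piecewise linear there. Continuity of the individual filtration levels in $t$ forces continuity across the critical values, yielding piecewise linearity on all of $[0,2] \cap \Q$. The main obstacle I expect is verifying that the tie-breaking choices in the decomposition algorithm can indeed be made consistently on each subinterval between critical values, so that one obtains a single collection of linear functions $\alpha_i(t)$ and a uniform identity $\Ut_K(t)/2 = \max_i \alpha_i(t)$ on each subinterval, rather than only pointwise decompositions with potentially unrelated pairings at each rational $t$.
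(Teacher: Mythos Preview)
Your approach is correct in spirit but differs from the paper's.  You track the decomposition algorithm of Lemma~\ref{lem:decomp} as $t$ varies and argue that its choices are locally constant in $t$, so that on each subinterval the orders $\alpha_i$ are fixed linear functions and $\Ut_K(t)/2 = \max_i \alpha_i(t)$.  The paper instead bypasses the decomposition entirely: it observes that $\CFK^\infty_0(K)$ and $\CFK^\infty_1(K)$ are finite-dimensional over $\F$, and writes $\Ut_K(t)$ directly as a max--min over all pairs $(x,y)$ with $\partial y = x$ of the quantity $\calf^t(y) - \calf^t(x)$.  Since each $\calf^t(z)$ is the maximum of the filtration levels of the basis elements appearing in $z$, it is piecewise linear in $t$; a finite min and then a finite max of piecewise linear functions is again piecewise linear.

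The advantage of the paper's route is exactly that it sidesteps the obstacle you flag: there is no need to verify that the tie-breaking choices in the reduction algorithm can be made coherently across an interval, because no particular decomposition is ever fixed.  Your approach can be made to work, but completing it essentially amounts to proving that the multiset of orders $\{\alpha_i\}$ (the barcode) is well-defined independent of choices and varies piecewise linearly; once you unwind what this means, you are led back to the paper's max--min description.  So your proposal is a valid outline, but the paper's argument is the cleaner way to close the gap you yourself identified.
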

\begin{proof}

We fix a knot $K$. To prove that $\Ut_K(t)$ is piecewise linear  we will present an algorithm to compute its value.  Notice that the algorithm is not very practical; if $\CFK^\infty(K)$ has $n$ generators over $\F[U]$, then the algorithm is exponential in $n$.  (Despite this, we can use it for some basic examples.)

For a fixed $t \in [0,2]\cap \Q$, we let $\calf^t(x)$ denote the filtration of an element $x \ne 0 \in \CFK^\infty(K)$.  We now restrict to such $x$ of grading 0, that is, $x\in \CFK^\infty_0(K)$,  and let $B_x = \{ y \in \CFK^\infty_1(K) \ \big| \ \partial y = x\}$.   Notice that $B_x$ is independent of the choice of $t$.

The set of $x$ for which $B_x$ is nonempty is precisely the set of grading 0 elements that are nontrivial in the
homology of  $\calf^t_s$ for some $s$, but are trivial in
$\calf^t_{s + \epsilon}$ for some $\epsilon >0$.  The value of $\Ut_K(t)$ is determined by these elements.  We denote the set of such  $x$ by $A_x$.

Now, for each $x$, let $d_x$ denote the minimum value of the set $\{ \calf^t(y) - \calf^t(x) \ \big| \ y\in B_x\}$.  Each element of the set being minimized in a nonnegative linear function of $t \in [0, 2] \cap \Q$.  We now let $\U^0$ denote the maximum value of the elements in $\{ d_x\}$.   The superscript $0$ indicates that we have restricted to elements of grading 0.  The result would be the same for any even grading.  We could also compute $\U^1$, starting with odd grading.  The value of $\Ut_K(t)$ is the maximum of  $\U^0$ and  $\U^1$.
\end{proof}

\begin{theorem} For each knot $K$, the function $\Ut_K(t)$ defined on $[0,2]\cap \Q$ extends uniquely to a  piecewise linear function on the interval $[0,2]\subset \R$.  \end{theorem}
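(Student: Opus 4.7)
The plan is to upgrade the previous theorem's algorithmic description of $\Ut_K$ on $[0,2]\cap\Q$ into a formula that makes sense verbatim on all of $[0,2]\subset\R$, and then to argue uniqueness of the extension by continuity on a dense set. The key is that the algorithm of the preceding proof does not merely show piecewise linearity point by point; it exhibits $\Ut_K(t)$ as a max-min of finitely many affine functions of $t$ with rational coefficients.

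Concretely, I would first fix a finite bifiltered basis of $\CFK^\infty(K)$ over $\F[U,U^{-1}]$; such a basis exists because $\CFK^\infty(K)$ is finitely generated over $\F[U,U^{-1}]$. For each basis element $x$ of grading $0$ (and similarly for grading $1$), the filtration level $\calf^t(x)$ is, by the very definition of $\calf^t$, an affine function $\alpha_x t+\beta_x$ with $\alpha_x,\beta_x\in\Q$. The same is true for any $\F$-linear combination, where the filtration level is the maximum of the affine functions of the nonzero components. The set $A_x$ (boundary elements with prescribed homology behavior) is drawn from this finite basis, and the minimum in $d_x$ can likewise be taken over finitely many chains $y$ (since only the highest-filtration term of $y$ matters, and there are only finitely many possible highest terms modulo irrelevant summands). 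Consequently, the function
\[
\Phi_K(t)\;\defeq\;\max\bigl(\U^0(t),\U^1(t)\bigr)
\]
is a max-min over a finite collection of affine functions $\alpha t+\beta$ with $\alpha,\beta\in\Q$. Such a max-min is automatically a continuous piecewise linear function of $t$ on all of $\R$, with finitely many breakpoints, all of which are rational.

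Since $\Phi_K$ agrees with $\Ut_K$ on $[0,2]\cap\Q$ by the previous theorem, the restriction $\Phi_K\big|_{[0,2]}$ is a continuous piecewise linear extension of $\Ut_K$ to $[0,2]\subset\R$. Uniqueness is then immediate: any two continuous functions on $[0,2]$ that agree on the dense subset $[0,2]\cap\Q$ must coincide, so in particular any two piecewise linear extensions are identical.

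The main obstacle, and really the only nontrivial point, is justifying that the $\max$ and $\min$ in the algorithm can be taken over a uniformly finite collection of affine functions independent of $t$. This requires the observation above that $\CFK^\infty(K)$ is finitely generated over $\F[U,U^{-1}]$, together with the remark that $\calf^t$ of a linear combination is the maximum of the $\calf^t$-values of its terms (so that only finitely many candidate "top terms" can ever appear in a minimizing $y\in B_x$). Once this finiteness is in hand, the extension and its uniqueness are formal.
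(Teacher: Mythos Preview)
Your proposal is correct and follows the same route the paper takes, only with more detail: the paper gives no separate proof of this theorem, treating it as an immediate consequence of the previous one together with the general observation (stated in Section~3) that a piecewise linear function on $[0,2]\cap\Q$ extends uniquely to one on $[0,2]$. Your write-up makes explicit the underlying reason---that the algorithm exhibits $\Ut_K(t)$ as a finite max--min of affine functions in $t$---and then invokes density for uniqueness, which is exactly the content the paper leaves implicit.

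One small simplification: your finiteness argument can be stated more directly. Since $\CFK^\infty(K)$ is finitely generated over $\F[U,U^{-1}]$ and $U$ shifts grading by $2$, each graded piece $\CFK^\infty_k(K)$ is a finite-dimensional $\F$--vector space, hence a finite set. Thus the sets $A_x$ and $B_x$ are genuinely finite, and no discussion of ``top terms modulo irrelevant summands'' is needed. Also note that for a general $\F$--linear combination the filtration level is a maximum of affine functions (hence piecewise linear), not itself affine; your argument already accommodates this, but the sentence ``the same is true for any $\F$-linear combination'' slightly overstates it.
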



\section{Mirror images}\label{sec:mirror}

This section presents the proof of the following result.

\begin{theorem}  For every knot $K$ and for $t \in [0,2]$, we have $\Ut_{-K}(t) = \Ut_K(t)$.

\end{theorem}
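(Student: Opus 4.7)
The plan is to exploit the duality between $\cCFK^\infty(K)$ and $\cCFK^\infty(-K)$ coming from standard mirror symmetry in knot Floer theory. Specifically, $\cCFK^\infty(-K)$ is isomorphic (possibly after swapping the roles of $u$ and $v$) as a filtered $\F[u,v]$-chain complex to the dual $\cCFK^\infty(K)^{*}$, with filtration levels negated: a generator $u^i v^j \bfx$ pairs with a dual generator whose $\calf^t$-filtration level is the negative of its own.

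Next, I would invoke Theorem~\ref{thm:oned} to decompose $\cCFK^\infty(K)$ as the direct sum of a trivial summand $C \otimes \F[u, u^{-1}, v, v^{-1}]$ and finitely many two-generator summands $D_\alpha \otimes \F[u, u^{-1}, v, v^{-1}]$, where each $D_\alpha$ is of the form $X_\alpha \to Y_\alpha$ with $\calf^t$-filtration difference $\alpha$; by definition $\Ut_K(t) = 2\max_\alpha \alpha$. Dualizing this decomposition produces a decomposition of $\cCFK^\infty(K)^{*}$ in which the dual of $C$ is again a one-generator complex with trivial differential, and the dual of $X_\alpha \to Y_\alpha$ is $Y_\alpha^{*} \to X_\alpha^{*}$; the filtration levels on $X_\alpha^{*}$ and $Y_\alpha^{*}$ are negated, so that $Y_\alpha^{*}$ becomes the higher-filtration generator with the same gap $\alpha$. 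Thus $\cCFK^\infty(-K)$ admits a decomposition of the form required by Theorem~\ref{thm:oned} with exactly the same multiset of $\alpha$'s, and in particular $\Ut_{-K}(t) = 2\max_\alpha \alpha = \Ut_K(t)$.

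The main obstacle will be establishing the precise duality at the level of filtered $\F[u,v]$-complexes, particularly the behavior of the variables $u$ and $v$ under mirroring. If the identification of $\cCFK^\infty(-K)$ with $\cCFK^\infty(K)^{*}$ requires swapping $u$ and $v$ (the natural effect of exchanging the basepoints $z$ and $w$), then the above argument initially yields only $\Ut_{-K}(t) = \Ut_K(2-t)$; but the already-established symmetry $\Ut_K(t) = \Ut_K(2-t)$ of Theorem~\ref{thm:props}(2) then recovers the desired identity. Nailing down these conventions, rather than carrying out the algebraic dualization step, is the real work.
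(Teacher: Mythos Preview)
Your proposal is correct and follows essentially the same approach as the paper: both arguments invoke the duality $\CFK^\infty(-K)\cong\CFK^\infty(K)^{*}$, apply the structural decomposition of Lemma~\ref{lem:decomp}/Theorem~\ref{thm:oned}, and finish by observing that each $D_\alpha$ dualizes to another $D_\alpha$ with the same $\alpha$. The only cosmetic difference is that the paper works directly with $\CFK^\infty$ and interprets the duality as a $180^\circ$ rotation $(i,j)\mapsto(-i,-j)$ of the bifiltration, which sidesteps your $u\leftrightarrow v$ concern entirely; your fallback via the symmetry $\Ut_K(t)=\Ut_K(2-t)$ is therefore unnecessary but harmless.
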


\begin{proof}
For a vector space $C$ over $\F$ we denote by $C^*$ its dual, consisting of homomorphisms to $\F$ with finite support.  If $\{x_i\}$ is a basis for $C$, then there is a dual basis  $\{x_i^*\}$.  If $C$ is filtered by $\calf_s$, then $C^*$ is filtered:  $\calf^*_s$ is generated by homomorphisms that vanish on $\calf_{-s}$.  The change of sign ensures that the dual filtration is increasing.

According to~\cite{MR2065507}, the bifiltered chain complex $\CFK^\infty(-K)$ is dual to $\CFK^\infty(K)$.    In Figure~\ref{fig:mirror} we illustrate a portion of $\CFK^\infty(T(3,4))$ and of $\CFK^\infty(-T(3,4))$ formed by rotating the diagram 180 degrees around the origin.   The $i$ and $j$ coordinates in these diagrams indicate the filtration levels of the generators.  The full complexes consists of these complexes and all their diagonal translates.

Working with a filtered basis for $\CFK^\infty(K)$, it is evident that for all $t$, the Upsilon filtration of  $\CFK^\infty(-K)$ is the dual filtration of the Upsilon filtration on $\CFK^\infty(K)$.   The proof of the theorem is completed by verifying that for the basic complexes $D_\alpha$  described in Lemma~\ref{lem:decomp}, the dual is also of the form $D_\alpha$ for the same value of $\alpha$.
\end{proof}

\begin{figure}[h]
\includegraphics[scale=.15]{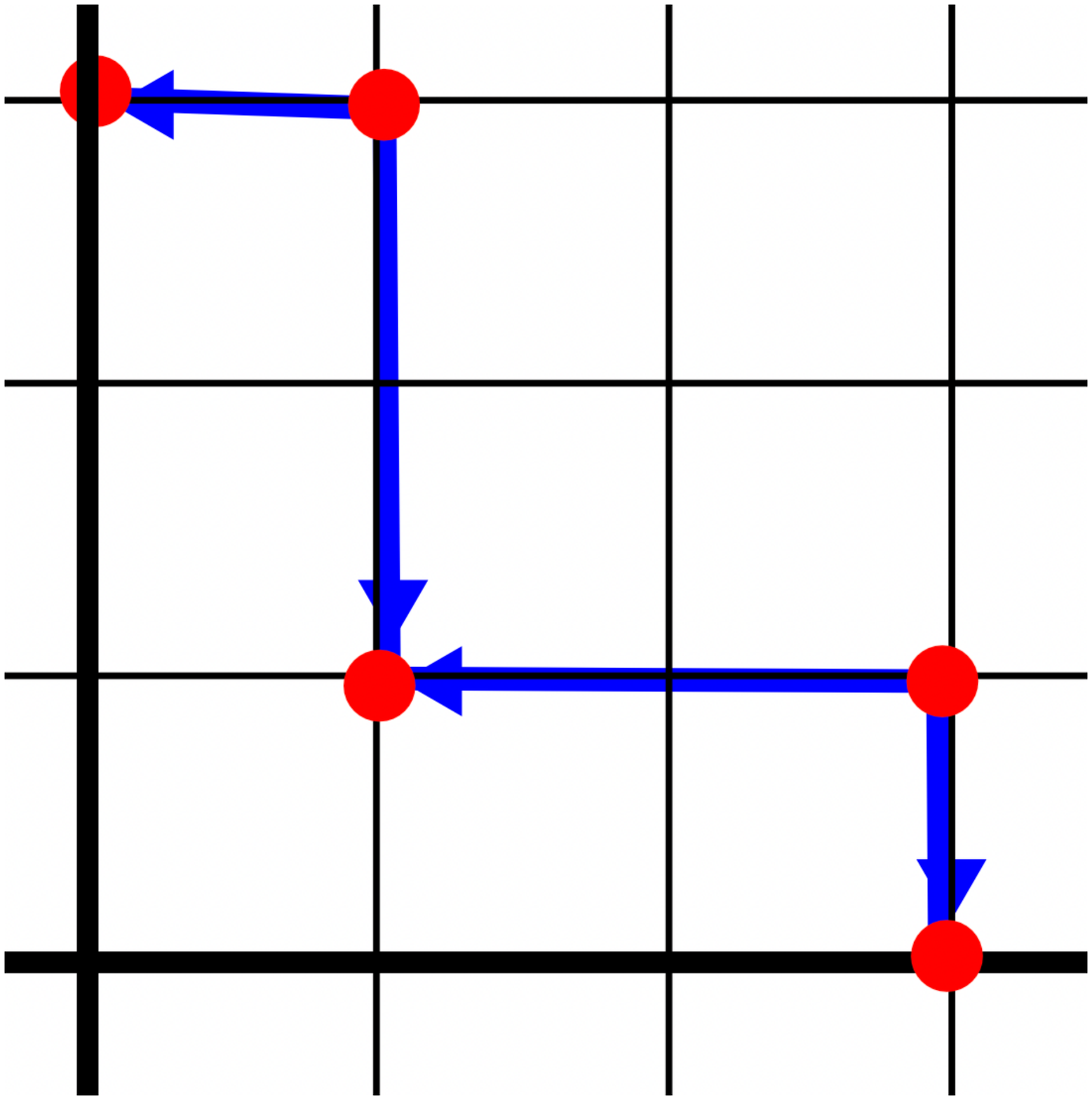}\hskip1.3in  \includegraphics[scale=.70]{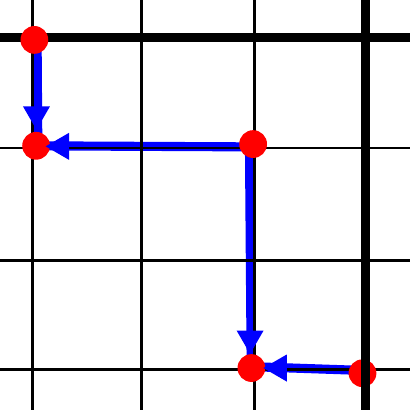}
\caption{The basic complexes representing $\CFK^\infty(T(3,4))$ and $\CFK^\infty(-T(3,4))$ .}
\label{fig:mirror}
\end{figure}


\section{Parity}\label{sec:parity}

It is possible to refine the Upsilon torsion function using the parity of gradings.  We briefly summarize this here, but do not pursue it because of a lack of interesting examples.

As described in Lemma~\ref{lem:decomp}, the filtered complex $(\CFK^\infty(K), \calf^t)$ is the direct sum of a complex that is free on one generator and of complexes of the form $D_\alpha \otimes \F[U, U^{-1}]$, where  $D_\alpha$ is of a the form $X \to Y$.  The maximum value among the $\alpha$ determines the invariant $\Ut_K(t)$.

We can divide the set of complexes of form $D_\alpha$ into two types.   We  call $D_\alpha$ even or odd depending on whether the grading of $Y$ is even or odd.  The actions of $U$ does not change the parity.  Stated differently, the Upsilon torsion splits into an even and odd part.  This permits us to define two functions: $\U^{\sf even, Tor}_K(t)$ and $\U^{\sf odd, Tor}_K(t)$.

\begin{example} For the right handed torus knot, $K = T(2,3)$ we have  $\U^{\sf even, Tor}_K(t) = t$ and $\U^{\sf odd, Tor}_K(t) = 0$.  For $-K$, these are reversed.  In general, taking the mirror image reverses the roles of even and odd.
\end{example}

The definitions extend to $\cCFK^\infty(K)$, using either $\gr_w$ or $\gr_z$.  Since these gradings, modulo two, are unchanged by the actions of $u$ and $v$, the choice is immaterial.
\smallskip

\noindent{\bf Question.}  For slice knots, are the even and odd Upsilon torsion functions the same?  Currently it is unknown whether the odd and even versions of $\text{Ord}_v(K)$ are equal for slice knots.  Ian Zemke provided us with a potential counterexample.  Figure~\ref{fig:acyclic} is the diagram associated to a complex for which the odd and even Upsilon torsion functions are given by $t/2$ and $t$, with the parity depending on the gradings of the generators.  We do know whether or not this complex arises from a slice knot, or from any knot.

\begin{figure}[h]
\includegraphics[scale=1]{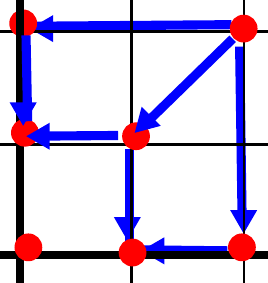}\ 
\caption{A complex with odd and even Upsilon torsion functions different.}
\label{fig:acyclic}
\end{figure}


\appendix

\section{Canonical form}

Let $\calc$ be a graded filtered $\F[x, x^{-1}]$--chain complex with the property that as an $\F[x,x^{-1}]$--module it is isomorphic to  $(\F[x,x^{-1}])^k$ for some $k$, with corresponding generators  denoted $\bfx_i$.  That is, elements of $\calc$ are finite sums $\sum p_i(x) \bfx_i$ where each $p_i(x)$ is a Laurent polynomial.

The complex  $\calc$ has an increasing filtration $\calf^t_s$ parameterized by the integers, where $x^j \bfx_i$ has filtration level $j$, and  $\calf^t_s$ is generated by combinations of elements $x^j \bfx_i$ with $j\le s$.  In particular, the $x^i \bfx_j$ form a filtered basis over $\F$.  The action of $x$ lowers filtration levels by 1.  We assume that it $x$ lowers gradings by 2.    We further assume that boundary map $\partial$ decreases gradings by 1 and is a filtered map.

In the following theorem, we let $\F[x,x^{-1}]X$ denote the free  $\F[x,x^{-1}]X$--module generated by an element $X$.

\begin{theorem}  If the homology group $H(\calc)$ is isomorphic to  $\F[x,x^{-1}]$ as a $\F[x]$--module, then there exist   finite set of elements $\{X\}, \{X_i\}$ and , $\{Y_i\}$ such that $\calc$ is isomorphic  as a filtered, graded, $\F[x]$--complex to a direct sum of complexes with underlying structure $\F[x,x^{-1} ]X$ and $  \F[x,x^{-1}]X_i \oplus \F[x,x^{-1}]Y_i  $.
The boundary map on the $\F[x,x^{-1} ]X$ summand is trivial and the boundary map on the $  \F[x,x^{-1}]X_i \oplus \F[x,x^{-1}]Y_i  $ summand is given by $\partial Y_i = X_i$.

\end{theorem}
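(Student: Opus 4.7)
The plan is to reduce $\calc$ to the claimed canonical form by iterating a filtered, $\F[x,x^{-1}]$-equivariant change-of-basis step that cancels off one pair of generators at a time. A key preliminary observation that I would exploit throughout is that the grading conventions (grading shift $-2$ for $x$, grading shift $-1$ for $\partial$) force every differential, written in any $\F[x,x^{-1}]$-basis $\bfx_1,\dots,\bfx_k$, to be a finite sum of monomial terms
\[
\partial\bfx_i \;=\; \sum_k c_{ik}\,x^{a_{ik}}\bfx_k, \qquad c_{ik}\in\F,
\]
since for each $(i,k)$ the exponent $a_{ik}$ is pinned down by the grading requirement $g_k - 2a_{ik} = g_i - 1$. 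The filtered hypothesis on $\partial$ then forces $a_{ik}\le 0$ whenever $c_{ik}\ne 0$.

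The inductive cancellation step will run as follows. Let $a^\ast$ be the maximum of the exponents $a_{mn}$ over all nonzero terms in all the differentials, and pick any pair $(i,j)$ with $c_{ij}\ne 0$ and $a_{ij}=a^\ast$. First, replace $\bfx_j$ by $\bfx_j'\defeq x^{-a^\ast}\partial\bfx_i = \bfx_j + \sum_{k\ne j} c_{ik}\,x^{a_{ik}-a^\ast}\bfx_k$; since each $a_{ik}-a^\ast\le 0$, this is a filtered $\F[x,x^{-1}]$-change of basis, and now $\partial\bfx_i = x^{a^\ast}\bfx_j'$. Second, for every $\ell\ne i$, replace $\bfx_\ell$ by $\tilde{\bfx_\ell}\defeq\bfx_\ell - c_{\ell j}\,x^{a_{\ell j}-a^\ast}\bfx_i$; by maximality of $a^\ast$ the added term has filtration $\le 0$, so this is again filtered, and eliminates the $\bfx_j'$ component from $\partial\tilde{\bfx_\ell}$. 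Setting $Y_i = \bfx_i$ and $X_i = x^{a^\ast}\bfx_j'$, the pair spans an $\F[x,x^{-1}]$-submodule with $\partial Y_i = X_i$; the constraint $\partial^2 = 0$ then forces the $\bfx_i$ component of each $\partial\tilde{\bfx_\ell}$, when re-expressed in the basis $\{\bfx_i, \bfx_j', \tilde{\bfx_m}\}$, to vanish, so the span of the $\tilde{\bfx_m}$ is a filtered $\F[x,x^{-1}]$-subcomplex of rank $k-2$ complementing the split-off pair.

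Iterating this procedure, the rank drops by two at each step, so the process terminates. The leftover generators, those never cancelled, all satisfy $\partial = 0$ and contribute the $\F[x,x^{-1}]X$-type summands. Since each two-generator piece is acyclic, $H(\calc)$ is a free $\F[x,x^{-1}]$-module whose rank equals the number of rank-one summands. The hypothesis $H(\calc) \cong \F[x,x^{-1}]$ as an $\F[x]$-module forces this rank to be one (noting that $\F[x,x^{-1}]^n\not\cong \F[x,x^{-1}]$ as $\F[x]$-modules for $n > 1$), completing the decomposition.

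The main obstacle will be verifying that both basis changes in the inductive step are genuinely filtered; this rests crucially on the \emph{global} maximality of $a^\ast$, since the first substitution requires $a^\ast \ge a_{ik}$ for every $k$ and the second requires $a^\ast \ge a_{\ell j}$ for every $\ell$. A more local choice of pivot could produce substitutions of positive filtration and ruin the argument. The grading-based reduction to monomial differentials is the essential preliminary that makes this global maximum well-defined and the bookkeeping tractable.
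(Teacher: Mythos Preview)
Your argument is correct and takes a genuinely different route from the paper's. The paper first invokes the decomposition from~\cite{MR3604374}, which splits off the rank-one free summand and expresses the remainder as a sum of ``zig-zag'' complexes (staircases of alternating generators), and then performs a sequence of filtered basis changes on each zig-zag to peel off two-generator pieces one at a time. Your approach bypasses the zig-zag intermediary entirely: it is a direct Gaussian-elimination cancellation, where the grading hypothesis is used to reduce to monomial differentials and the global maximum exponent $a^\ast$ serves as the pivot, guaranteeing both substitutions are filtered. Your argument is more self-contained (no appeal to~\cite{MR3604374}) and arguably cleaner; the paper's approach has the mild advantage of making the intermediate structure visible, which can be useful when one wants to track how the decomposition varies with $t$. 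Both arguments are standard in spirit, and yours would serve equally well as a proof of the theorem.
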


\begin{proof}  The proof in~\cite{MR3604374} demonstrates after splitting off a single summand that is free on one generator (the ``$X$" summand) it is possible to split off a summand of the form shown on the left in Figure~\ref{fig:zig-zag1}.  We call such complexes {\it zig-zags}.  The filtration levels are indicated by the heights of generators.  That diagram and its notation will serve as a schematic for the rest of the proof.

If the generator $d_k$ of greatest height is not the rightmost in the diagram, we can perform a change of basis: add all generators $d_i$ with $i>k$ to $d_k$.  The effect is to split the zig-zag into a pair of disjoint zig-zags.  Thus we can assume the highest $d_i$ is the rightmost one.

We are now in the situation shown on the right in Figure`\ref{fig:zig-zag1}.  If the $c_i$ of least height is not leftmost, it can be added to all the generators to its left; the affect of this change of basis is again to split the zig-zag into two zig-zags.  We can assume then that we are in the situation shown on the left in Figure~\ref{fig:zig-zag3}.

Now, we can perform a change of basis that replaces the rightmost $d_i$, with the sum of all the  $d_i$.  The only effect is that now the boundary of $d_k$ is $c_1$, the leftmost $c_i$ vertex.  The result is as illustrated on the right in Figure~\ref{fig:zig-zag3}.

Finally, if we add the leftmost $c_i$ to all the other $c_i$, the effect is to split of the leftmost pair $d_1 \to c_1$ as a summand.

The rest of the argument precedes inductively. 
 \end{proof}

\begin{figure}[h]
 \labellist
 \pinlabel {\text{{$d_1$}}} at 25 110
 \pinlabel {\text{{$d_2$}}} at 60 145 
  \pinlabel {\text{{$d_3$}}} at 85 115 
   \pinlabel {\text{{$d_4$}}} at 120 130
    \pinlabel {\text{{$c_1$}}} at -5 30
 \pinlabel {\text{{$c_2$}}} at 40 -10
  \pinlabel {\text{{$c_3$}}} at 65 60 
   \pinlabel {\text{{$c_4$}}} at 100 30
 \endlabellist
\includegraphics[scale=.60]{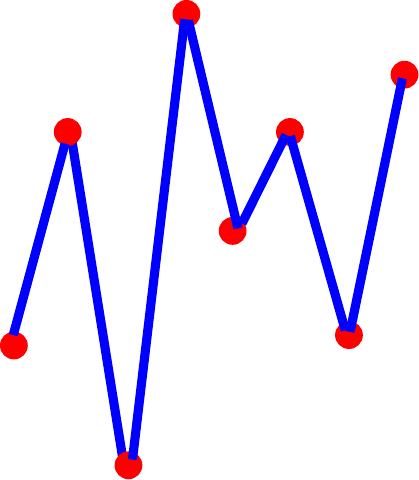}  \hskip1.7in \includegraphics[scale=.6]{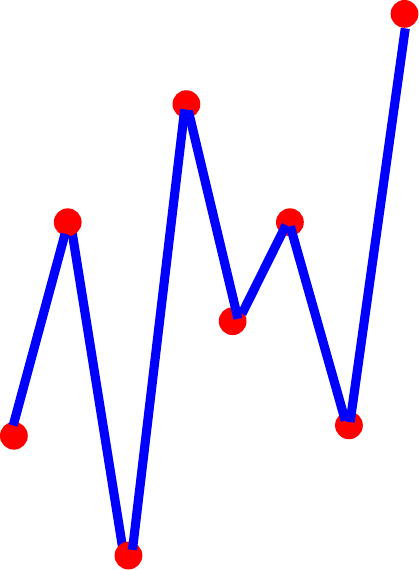} 
\caption{Examples of zig-zag complexes}
\label{fig:zig-zag1}
\end{figure}

\begin{figure}[h]

\includegraphics[scale=.60]{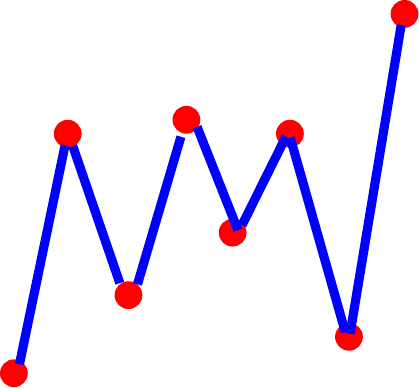}  \hskip1.7in \includegraphics[scale=.6]{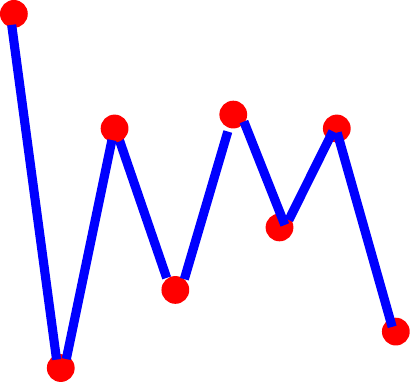} 
\caption{Examples of zig-zag complexes}
\label{fig:zig-zag3}
\end{figure}

\section{$T(3,4)$ example}

Figure~\ref{fig:figurea} contains three diagrams.  The first is a schematic for a bifiltered complex $C$.  It and all its diagonal translates represents $\CFK^\infty(T(3,4))$.  Call the five vertices, from left to right, $a, b, c, d$, and $e$.  For the filtration $\calf^t$ with $t=1/2$ these have filtration levels $3/4, 6/4, 4/4, 10/4,$ and  $9/4$.  For the filtration $\calf^t$ with $t=6/7$ these have filtration levels $9/7, 13/7, 7/7, 15/7$ and  $12/7$.  These filtered complexes are illustrated in second and third diagrams; in these two illustration the filtration level is vertical and the horizontal coordinate is not meaningful.

\begin{figure}[h]
\includegraphics[scale=.17]{t34a}\hskip.8in \includegraphics[scale=.22]{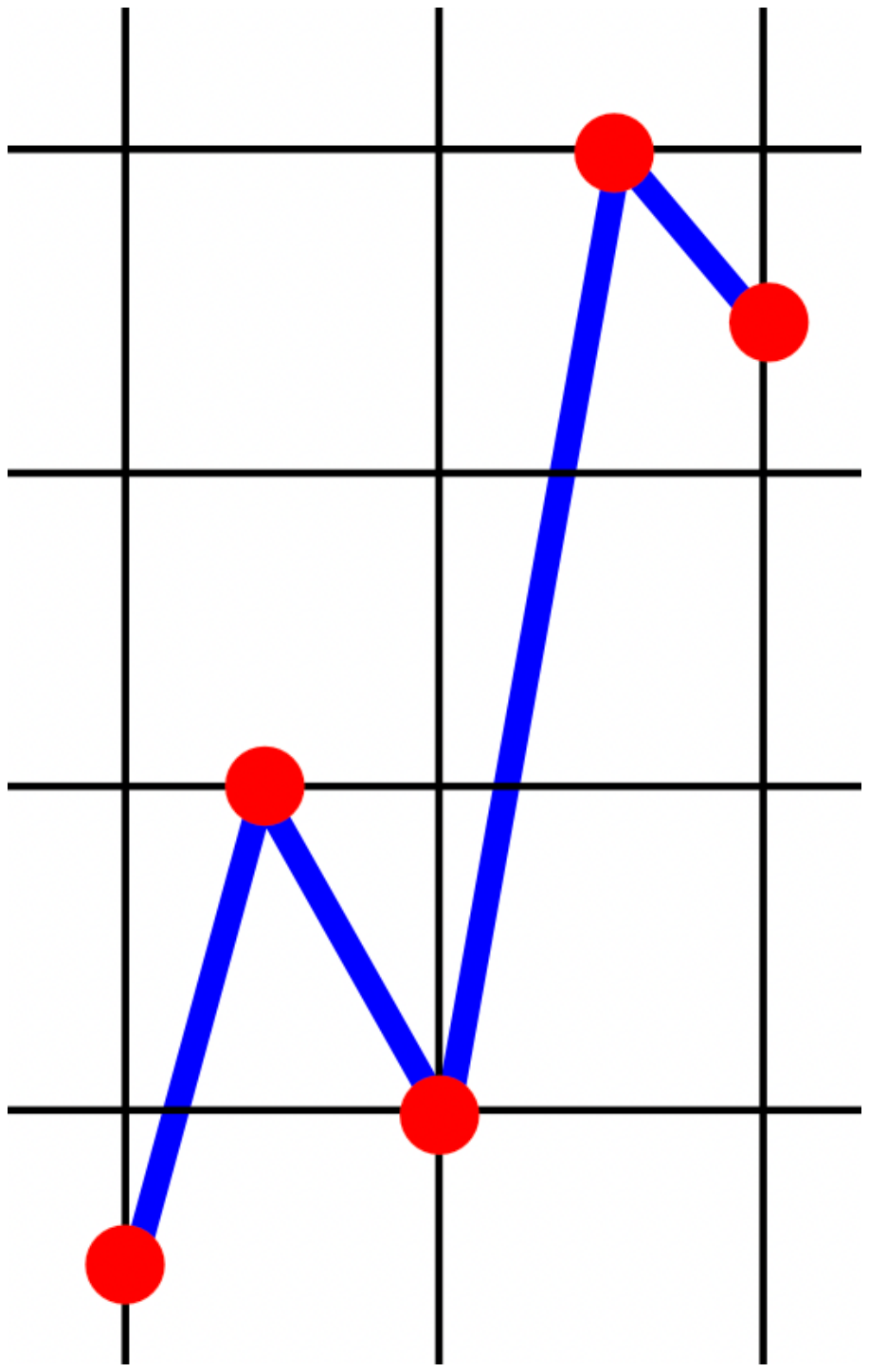}\hskip.8in\includegraphics[scale=.22]{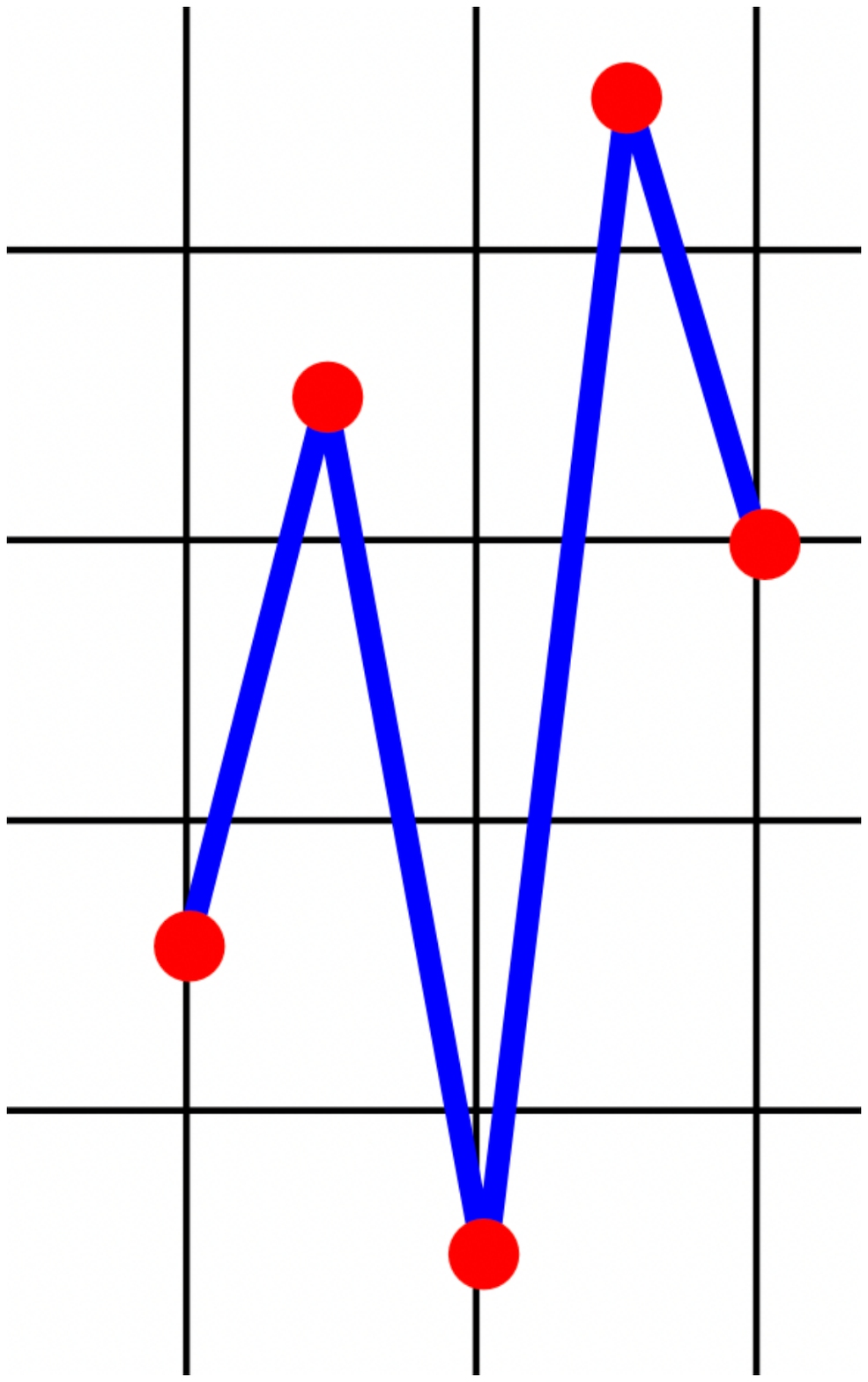}
\caption{The basic complex representing $\CFK^\infty(T(3,4))$ and its filtered complexes for $t= 1/2$ and $t= 6/7$.}
\label{fig:figurea}
\end{figure}

In Figure~\ref{fig:figureb} we have illustrated the same two complexes, using a different basis.   In the first diagram the basis elements from left to right are $a, b, a+c, d,$ and $c+e$.   In the second diagram they are $a+c, b, c, d, $ and $c+e$.

\begin{figure}[h]
\includegraphics[scale=.3]{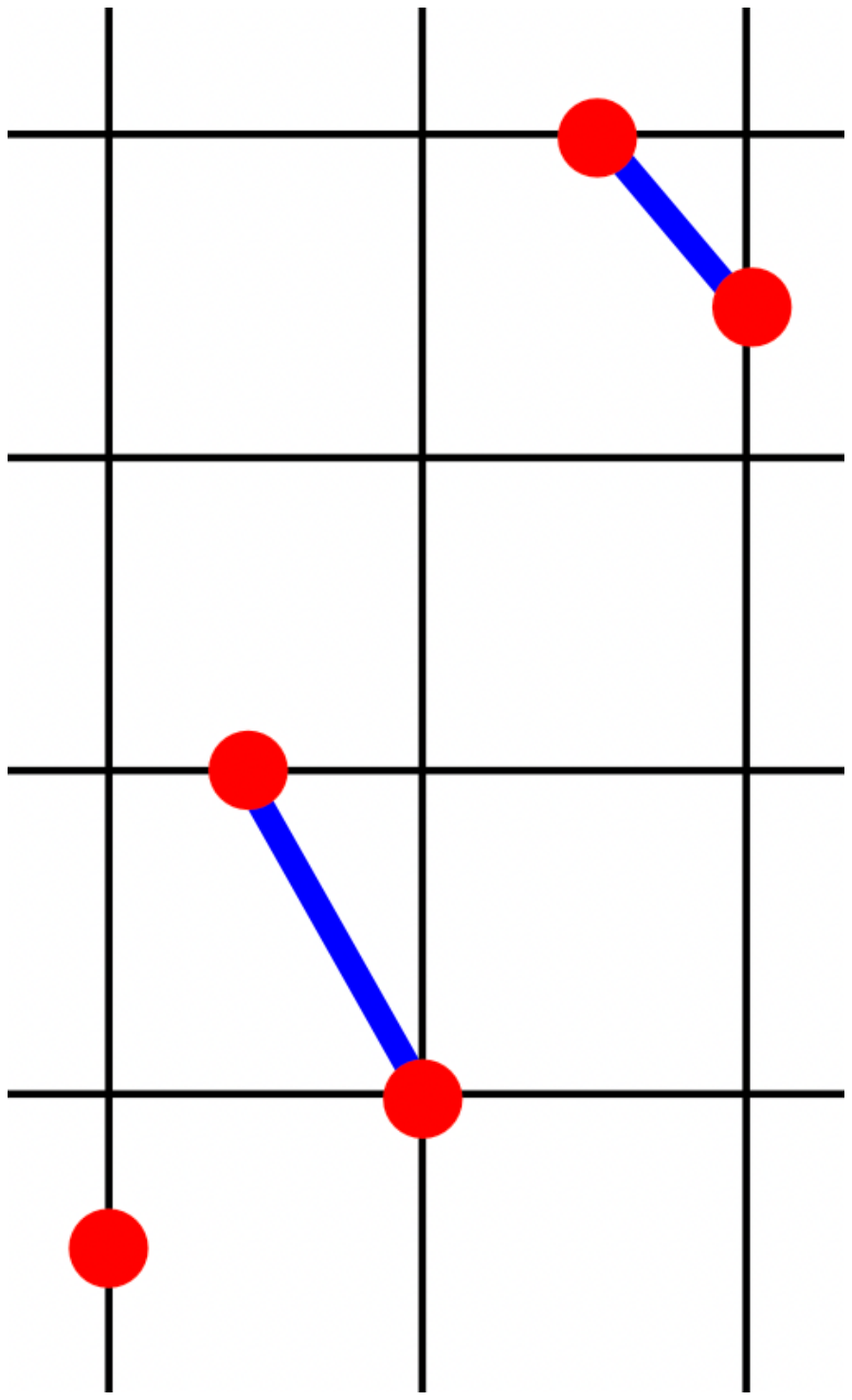}  \hskip1in  \includegraphics[scale=.3]{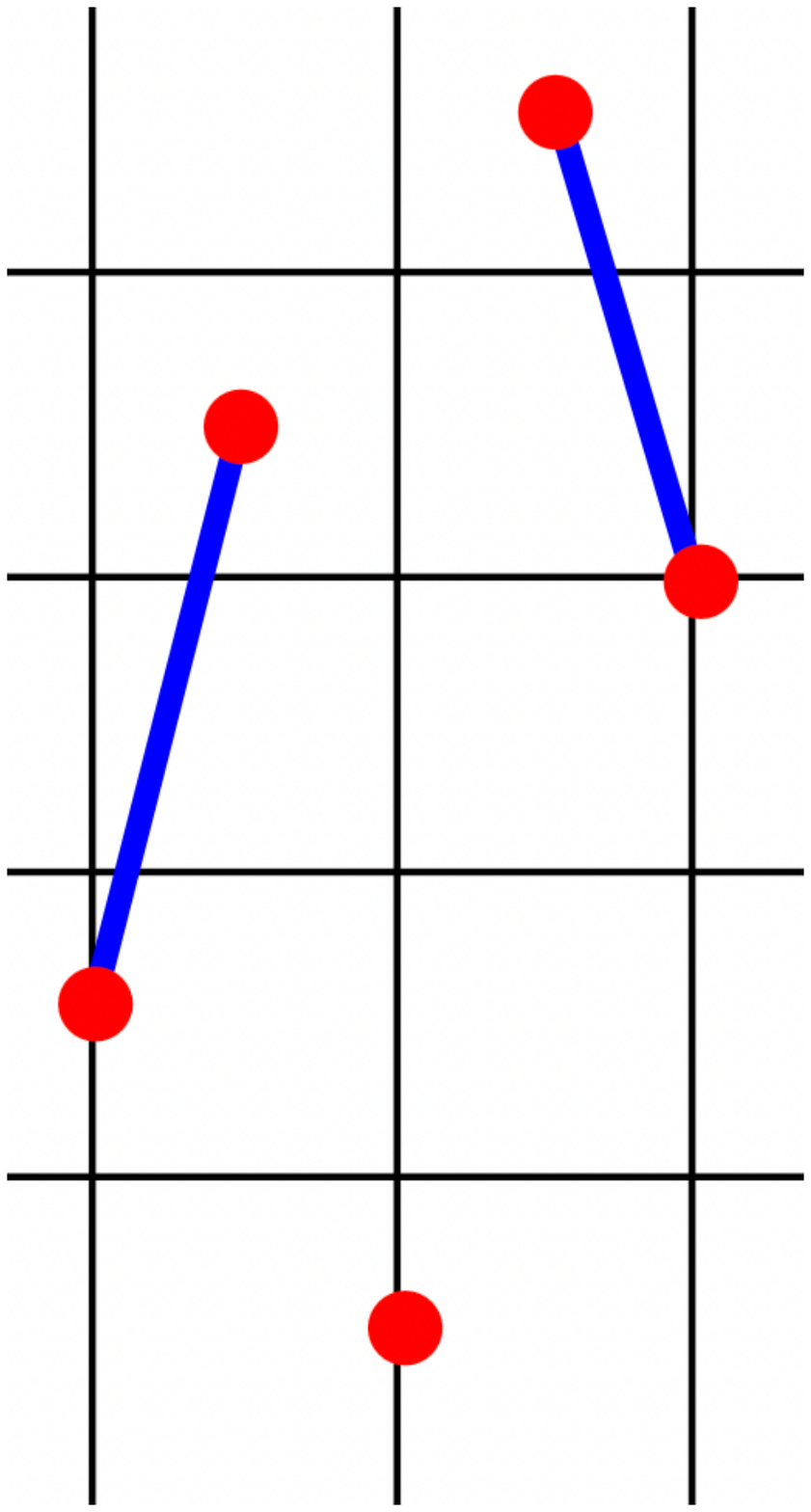}
\caption{The  filtered complexes for $t= 1/2$ and $t= 6/7$ after a change of basis..}
\label{fig:figureb}
\end{figure}

The maximum vertical height of one of the segments in the diagram on the left (corresponding to $t= 1/2)$ is $2/4$, and thus $\Uts{K}(1/2) = 2(2/4) = 1$.   The maximum vertical height of one of the segments in the diagram on the right (corresponding to $t= 6/7)$ is $4/7$, and thus $\Uts{K}(6/7) = 2(4/7) = 8/7$.

\bibliographystyle{amsplain}	
\bibliography{BibTexComplete}

\end{document}